\newcommand{\emptyword}{\varepsilon}
\newcommand{\shortlex}{{\sc Shortlex}\xspace}
\newcommand{\fsa}{{\sf fsa}\xspace}
\newtheorem{theorem}{Theorem}[section]
\newtheorem{proposition}[theorem]{Proposition}
\newcommand{\Z}{{\mathbb Z}}
\newcommand{\N}{{\mathbb N}}
\newcommand{\R}{{\mathbb R}}
\newcommand{\HH}{{\mathbb H}}
\newcommand{\SL}{{\mathrm{SL}}}
\newcommand{\cD}{{\mathcal D}}
\newcommand{\cH}{{\mathcal H}}
\newcommand{\cR}{{\mathcal R}}
\newcommand{\cX}{{\mathcal X}}
\newcommand{\Cay}{{\sf Cay}\xspace}
\newcommand{\Nil}{{\sf Nil}\xspace}
\newcommand{\Sol}{{\sf Sol}\xspace}
\newcommand{\Area}{{\sf Area}\xspace}
\newcommand{\kbmag}{{\sc kbmag}\xspace}
\newcommand{\gap}{{\sf GAP}\xspace}
\newcommand{\magma}{{\sf Magma}\xspace}
\def \rewrites#1 {\xrightarrow{#1}\,}
\def \seqrewrites#1 {\xrightarrow{#1}\!\!{}^*\,\,}
\def \slexle{<_{\rm slex}}
\def \mij {{\sf m_{ij}}}
\def \mjk {{\sf m_{jk}}}
\def \mik {{\sf m_{ik}}}
\def \mil {{\sf m_{il}}}
\def \mkl {{\sf m_{kl}}}
\def \WA {{\sf WA}}
\title{The development of the theory of automatic groups}
\author{Sarah Rees,\\ University of Newcastle,\\ Sarah.Rees@newcastle.ac.uk}
\begin{document}
\maketitle
\begin{abstract}
We describe the development of the theory of automatic groups.
We begin with a historical introduction, define the concepts
of automatic, biautomatic and combable groups,
derive basic properties, then explain how hyperbolic groups and the groups 
of compact 3-manifolds based on six of Thurston's eight geometries can be 
	proved automatic.
We describe software developed in Warwick to compute automatic structures, as
well as the development of practical algorithms that use those structures.
We explain how actions of groups on spaces displaying various notions of 
negative curvature can be used to prove automaticity or biautomaticity,
and show how these results have been used to derive these properties for 
groups in some infinite families (braid groups, mapping class groups, families of Artin groups, and Coxeter groups). 
Throughout the text we flag up open problems as well as problems that
remained open for some time but have now been resolved.
\end{abstract}
AMS subject classifications: 20F10, 20F36, 20F55, 20F65,
20F67, 57M60, secondary classification: 03D10, 68Q04 

Keywords: automatic group, hyperbolic group, finite state automaton, combing, 3-manifold group, decision problem, word problem, conjugacy problem, Artin group, Coxeter group, mapping class group.

\newpage
\tableofcontents
\newpage
\section{Introduction}
\label{sec:intro}

This chapter describes the development of the theory of automatic groups. It aims to explain the definition, and put that into mathematical
and historical context, to detail what is known, give brief accounts of some
of the big problems in the subject that have already been solved, and describe
those problems that remain open.

Thurston is credited with the definition of automatic groups, and is one of six
authors of one of the primary early references of the subject \cite{ECHLPT};
but some of the foundations were laid in particular in work of Gromov on
hyperbolic groups\index{hyperbolic group} \cite{Gromov}, Cannon on properties of the fundamental
groups of compact hyperbolic manifolds \cite{Cannon84}, Gilman on groups with
rational cross-sections \cite{Gilman}. The standard reference is certainly the book \cite{ECHLPT}, but that is supplemented by some powerful results 
in \cite{BGSS,GerstenShort90,GerstenShort91}, while 
Farb's article \cite{Farb92} gives a useful and readable overview of early development of the subject.

The definition of an automatic group\index{automatic!group} was originally designed to identify properties of a 
group that were observed in the fundamental groups of compact hyperbolic 
3-manifolds\index{3-manifold}, and which facilitated computation with those groups.
Such groups are finitely generated. When a group is automatic, its associated 
automatic structure allows the elements of the group to be represented
as strings belonging to a particularly well structured set of strings, 
for which certain computations can be easily performed using finite state automata, as we shall see below.

Within this introductory section, 
we shall give some historical background, then define the notation and terminology that we shall need in the remainder of this chapter. 
Section~\ref{sec:automatic} contains the definition of an automatic group, 
identifies the basic properties, and describes the most natural examples,
and non-examples. 
Section~\ref{sec:computing} describes computation with automatic groups, how automatic structures may be computed, how they may, and have been, used. 
Section~\ref{sec:actions} describes how automaticity or biautomaticity of a group
may be deduced from the geometry of a space on which the group has a good action. 
Section~\ref{sec:eg_families} describes the derivation of results proving automaticity or 
biautomaticity of groups in some well known families of group, which often used  techniques or results described in Section~\ref{sec:actions}.
Finally Section~\ref{sec:open_problems} describes some problems that remain open.

\subsection{Historical background}
\label{sec:historical}
Alongside Thurston, it is natural to indentify 
Cannon, Epstein and Holt as the key figures in the early development of automatic groups.  
Much of the information in this section comes from discussion with these three people \cite{Cannon21,Epstein20,Holt21}, or can be found in the preface of the standard 
reference \cite{ECHLPT}.

Cannon's article \cite{Cannon21} identifies the International Congress of Mathematicians in Helsinki in 1978 as a location at which key ideas that influenced the development of the concept of an automatic group were discussed.

In his plenary address, Thurston discussed the construction of geometric structures on a 3-manifold $M$, and the tesselation of its universal cover $\tilde{M}$
by a structure dual to the Cayley graph of $\pi_1(M)$.
Thurston's geometrisation conjecture \cite{Thurston82}, subsequently proved by Perelman, claimed that every closed 3-manifold was geometrisable, that is, admitted a
canonical decomposition
into pieces each admitting one of eight types of geometric structure.

In his article \cite{Cannon21}, Cannon attributes to Thurston at that conference
the conjecture that
the growth series of a group $G$ acting discretely, cocompactly and isometrically on a finite dimensional hyperbolic space $\HH_n$
should be a rational function. Cannon proved that conjecture in \cite{Cannon84},
where he identified features of $\HH_n$ within the Cayley graph $\Cay(G,X)$ for $G$ with respect to a finite generating set $X$.
In particular, he proved that $\Cay(G,X)$ admits finitely many types of ``cones'' on geodesics, and deduced from this
the rationality of the growth function of $G$.
Cannon also proved that the word\index{word problem} and conjugacy problems\index{conjugacy problem} for $G$ could be solved 
using analogues of Dehn's algorithms for those in hyperbolic surface groups \index{hyperbolic group}.
Gromov's 1987 article \cite{Gromov} defined a combinatorial notion of
hyperbolicity for a graph, and hence for a group (via its Cayley graph),
and generalised Cannon's results to groups satisfying this definition of hyperbolicity.
There is a substantial body of material studying (Gromov) hyperbolic groups\index{hyperbolic group},
in particular \cite{AlonsoEtAl}.

Thurston realised that the finiteness of the set of cone types in one of
Cannon's groups of hyperbolic isometries allowed the construction of a 
finite state automaton recognising the set of geodesic words within the group;
rationality of the growth function is an immediate consequence of
that set of words being the language of a finite state automaton.
``Fellow travelling'' properties of quasi-geodesic paths in $\HH_n$ that had been recognised by Cannon allowed the construction of further automata that recognised
right multiplication in the goup by a generator.

Now Thurston defined the concept of an automatic group\index{automatic!group}.
He called a group with finite generating set $X$ {\em automatic} if it
possessed a representative set of words $L$ over $X$, such that one finite
state automaton recognised the words in $L$, and other automata
recognised pairs of words in $L$ related in the group under right multiplication by the generators in $X$. 
Very early on, groups of this type were known as {\em regular groups} \cite{Holt21}. 
But this terminology conflicted with other uses of the term regular, and so was soon changed.

Initially,
in particular in \cite{ECHLPT,BGSS}, 
the study of the family of automatic groups 
was largely driven by the desire to find within it the groups of the geometrisable 3-manifolds,
and hence to harness computational techniques that were provided by the association of automatic groups with regular languages. 
Epstein realised very early on that any automatic group must be finitely presented,
while Thurston deduced that any such group had
quadratic Dehn function and hence word problem soluble in quadratic time.
Epstein and Holt in Warwick worked, together with the author of this chapter,
to develop practical procedures to (attempt to)
build automatic structures for finitely presented groups, and to compute
within the groups using those structures.

\subsection{Mathematical background and notation}
\label{sec:background_notation}
All the groups that we consider will be finitely generated. If $X$ is a finite
generating set for a group $G$, then we write
$G = \langle X \rangle$.
In that case every element of $G$ can be represented as a product (or string) of
elements of $X$ and their inverses. We denote by $X^{-1}$ the set of
symbols $x^{-1}$ for which $x \in X$, and then by
$X^{\pm}$ the disjoint union of $X$ and $X^{-1}$; every non-identity
element of $G$
can now be described as a string of elements of $X^{\pm}$.
The identity element, which we denote by $1$, can be described as a product of
length 0.

Given a finite set $A$, 
we define a {\em string} $w$ over $A$ to be a sequence
$a_1a_2\cdots a_n$ with $a_i \in A$, and call $n$ the {\em length} of $w$,
denoted by $|w|$; we may alternatively use the term {\em word} over $A$ rather than string.
A subsequence $a_{i}a_{i+1}\cdots a_j$ of $w$ is called a {\em substring} or {\em subword}.
We write $w(i)$ for the {\em prefix} $a_1\cdots a_i$
of $w$.
We call the string or word of length $0$ over $A$ the {\em empty string} or {\em empty word} and denote that by $\emptyword$.
As is standard, we 
denote by $A^+$ the set of all strings over $A$ of finite length $>0$ and by
$A^*$ the union $A^+ \cup \{ \emptyword \}$.
Given an ordering of the elements of $A$, we define the shortlex ordering on $A^*$ as follows: for words 
$u=x_1\ldots x_r$ and $v=y_1\ldots y_s$ , we define $v \slexle u$ if 
$|v|<|u|$, or if $|u|=|v|$ and for some $i$,
$y_1=x_1,\ldots,y_{i-1}=x_{i-1}$ but $y_i<x_i$.

When $X$ is a generating set for a group $G$, and $w \in (X^\pm)^*$,
it is often convenient to abuse notation and use $w$ to indicate not only 
that string over $X^\pm$ but also the group element that the string 
represents; if $w,v \in X^\pm$,
we write $w=v$ to denote that $w,v$ are identical as strings,
and $w=_G v$ to denote that $w,v$ represent the same group element.
If $g \in G$, we denote by $|g|$ the length of the shortest word over $X^\pm$
that represents $g$.
Suppose that $\Cay=\Cay(G,X)$ is the {\em Cayley graph}\index{Cayley graph} of $G$ over $X$, 
that is 
the graph with vertex set $G$ and, for each $g \in G$, $x \in X$,
directed edges labelled $x$ and $x^{-1}$
connecting the ordered pairs of vertices $(g,gx)$ and $(gx,g)$. 
Then for each $g \in G$, a path labelled by $w$ joins the vertex $g$ of $\Cay$
to the vertex $gw$; we shall represent that path as ${}_g w$.

When $G$ is finitely generated by $X$, we define a {\em language}\index{language} for 
$G$ over $X$ to be
a subset of $(X^\pm)^*$ that contains at least one representative of each
element of $G$, that is, that maps onto $G$ under the map assigning each 
product over $X$ to the element it represents.

For the free group $F_n$ on a set $X$ of $n$ generators $x_1,\ldots,x_n$, a language is provided by the set of all {\em freely reduced} words  of length $\geq 0$
over $X^\pm$, that is, the set of all words within which no
subword $x_ix_i^{-1}$ or $x_i^{-1}x_i$ appears.
For the free abelian group $\Z^n$ on the same set of $n$ generators,
a language is provided by the set of all words of the form
$x_{i_1}^{r_1}x_{i_2}^{r_2}\cdots x_{i_k}^{r_k}$, with $k \geq 0$,
$i_1<i_2<\cdots <i_k$ 
and $r_i \in \Z \setminus \{0\}$.
In each of these two examples the language provides a unique representative for each group element.

Each of the two languages just described is an example of a {\em regular language}\index{language!regular},  that is, it is the set $L(M)$ of strings accepted by a {\em finite state automaton} (\fsa) $M$ with alphabet $\{x_1^{\pm 1},x_2^{\pm 1}\,\ldots, x_n^{\pm 1}\}$. Finite state automata provide
standard models of bounded memory computation and are defined and studied in \cite{HopcroftUllman}. It is common to
represent a finite state automaton $M$ with alphabet $A$ as a finite directed graph,
with each directed edge labelled by one or more elements of $A$, 
one vertex identified as the {\em start}, and a subset of the vertices selected as 
{\em accepting}. A word $w$ is then accepted by $M$ if it labels at least one 
directed path from the start to an accepting vertex; if there is no such path, or if
the end point ({\em target}) of every such path is a non-accepting vertex then $w$ is not accepted. It is standard to call the vertices of $M$
its {\em states}, the directed edges its {\em transitions} and the set of accepted words its {\em language} $L(M)$.
In the cases where $n=2$,
the languages described above for the free and free abelian groups over $\{a,b\}$
are accepted by the two
finite state automata shown in Figure~\ref{fig:fsa_F2Z2};
in each diagram, following convention, the start state is indicated by an arrow, and the
accepting states are ringed. In each of the two examples, each of the five states 
shown in the diagram is accepting, but a further {\em failure state} is not 
shown, which constitutes a sixth state; the failure state is non-accepting, 
any transitions not shown in the diagram are assumed to be to that failure 
state, and all transitions from the failure state are to the failure state.

\setlength{\unitlength}{1.0pt}
\begin{figure}[h]
\begin{minipage}[l]{2.2in}
\begin{picture}(260,160)(-80,-80)
\put(-15,-10){\vector(3,2){10}}
\put(0,0){\circle*{4}}
\put(0,0){\circle{10}}
\put(50,0){\circle*{4}}
\put(50,0){\circle{10}}
\put(-50,0){\circle*{4}}
\put(-50,0){\circle{10}}
\put(0,50){\circle*{4}}
\put(0,50){\circle{10}}
\put(0,-50){\circle*{4}}
\put(0,-50){\circle{10}}
\put(0,0){\line(1,0){50}}
\put(25,0){\vector(1,0){0}}
\put(25,2){$a$}
\put(0,0){\line(-1,0){50}}
\put(-25,0){\vector(-1,0){0}}
\put(-25,2){$a^{-1}$}
\put(0,0){\line(0,1){50}}
\put(0,25){\vector(0,1){0}}
\put(2,25){$b$}
\put(0,0){\line(0,-1){50}}
\put(0,-25){\vector(0,-1){0}}
\put(2,-25){$b^{-1}$}
\put(65,0){\oval(30,30)}
\put(80,0){\vector(0,1){0}}
\put(72,0){$a$}
\put(-65,0){\oval(30,30)}
\put(-80,0){\vector(0,1){0}}
\put(-77,0){$a^{-1}$}
\put(0,65){\oval(30,30)}
\put(0,80){\vector(1,0){0}}
\put(-2,82){$b$}
\put(0,-65){\oval(30,30)}
\put(0,-80){\vector(1,0){0}}
\put(-2,-78){$b^{-1}$}
\put(0,0){\oval(100,100)[tr]}
\put(30,50){\vector(-1,0){0}}
\put(33,53){$b$}
\put(0,0){\oval(100,100)[br]}
\put(30,-50){\vector(-1,0){0}}
\put(35,-60){$b^{-1}$}
\put(0,0){\oval(100,100)[tl]}
\put(-30,50){\vector(1,0){0}}
\put(-35,53){$b$}
\put(0,0){\oval(100,100)[bl]}
\put(-30,-50){\vector(1,0){0}}
\put(-35,-60){$b^{-1}$}
\put(0,50){\line(1,-1){50}}
\put(25,25){\vector(1,-1){0}}
\put(25,25){$a$}
\put(0,50){\line(-1,-1){50}}
\put(-25,25){\vector(-1,-1){0}}
\put(-36,25){$a^{-1}$}
\put(0,-50){\line(1,1){50}}
\put(25,-25){\vector(1,1){0}}
\put(25,-30){$a$}
\put(0,-50){\line(-1,1){50}}
\put(-25,-25){\vector(-1,1){0}}
\put(-41,-30){$a^{-1}$}
\end{picture}
\end{minipage}
\begin{minipage}[r]{2.2in}
\begin{picture}(260,160)(-100,-80)
\put(-15,-10){\vector(3,2){10}}
\put(0,0){\circle*{4}}
\put(0,0){\circle{10}}
\put(50,0){\circle*{4}}
\put(50,0){\circle{10}}
\put(-50,0){\circle*{4}}
\put(-50,0){\circle{10}}
\put(0,50){\circle*{4}}
\put(0,50){\circle{10}}
\put(0,-50){\circle*{4}}
\put(0,-50){\circle{10}}
\put(0,0){\line(1,0){50}}
\put(25,0){\vector(1,0){0}}
\put(25,2){$a$}
\put(0,0){\line(-1,0){50}}
\put(-25,0){\vector(-1,0){0}}
\put(-25,2){$a^{-1}$}
\put(0,0){\line(0,1){50}}
\put(0,25){\vector(0,1){0}}
\put(2,25){$b$}
\put(0,0){\line(0,-1){50}}
\put(0,-25){\vector(0,-1){0}}
\put(2,-25){$b^{-1}$}
\put(65,0){\oval(30,30)}
\put(80,0){\vector(0,1){0}}
\put(72,0){$a$}
\put(-65,0){\oval(30,30)}
\put(-80,0){\vector(0,1){0}}
\put(-77,0){$a^{-1}$}
\put(0,65){\oval(30,30)}
\put(0,80){\vector(1,0){0}}
\put(-2,82){$b$}
\put(0,-65){\oval(30,30)}
\put(0,-80){\vector(1,0){0}}
\put(-2,-78){$b^{-1}$}
\put(0,0){\oval(100,100)[tr]}
\put(30,50){\vector(-1,0){0}}
\put(33,53){$b$}
\put(0,0){\oval(100,100)[br]}
\put(30,-50){\vector(-1,0){0}}
\put(35,-60){$b^{-1}$}
\put(0,0){\oval(100,100)[tl]}
\put(-30,50){\vector(1,0){0}}
\put(-35,53){$b$}
\put(0,0){\oval(100,100)[bl]}
\put(-30,-50){\vector(1,0){0}}
\put(-35,-60){$b^{-1}$}
\end{picture}
\end{minipage}
\caption{\fsa $M_1,M_2$ giving the languages described in the text for $F^2$ and $\Z^2$}
\label{fig:fsa_F2Z2}
\end{figure}
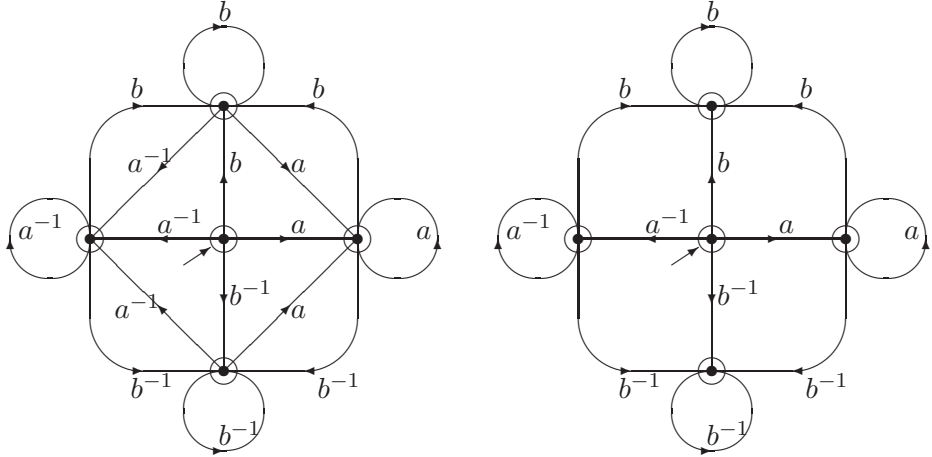

\section{Automatic groups}
\label{sec:automatic}
\subsection{Definition of an automatic group}
\label{sec: defn}
Now suppose that $G$ is a group with finite generating set $X$.
For $k \in \N$,
words $w,v$ over $X^\pm$ are said to {\em $k$-fellow travel}\index{fellow travel} in $G$
if for each $i \leq \max\{ |w|,|v|\}$ the distance between the vertices 
$w(i)$ and $v(i)$ of $\Cay=\Cay(G,X)$ (using the graph metric) is at most $k$.
Equivalently, we say that the paths ${}_1w$ and ${}_1v$ of $\Cay$
 $k$-fellow travel.
A
group $G$ with finite generating set $X$ is defined to be {\em automatic}\index{automatic!group}
over $X$
if
\begin{description}
\item[A1] there is a language $L$ for $G$ over $X$ that is regular,
\item[A2] there is an integer $k$ such that,
for each $y \in X \cup \{1\}$, 
		and for any $w,v \in L$ with  $wy=_G v$, the paths ${}_1w,{}_1v$ $k$-fellow travel in $\Cay$.
\end{description}
We call $L$ the {\em language}\index{language}, the \fsa accepting $L$ the {\em word acceptor}\index{word acceptor} and $k$ the {\em fellow traveller constant}\index{fellow travel!constant}
of an {\em automatic structure}\index{automatic!structure} for $G$.

The \fsa $M_1$ illustrated in Figure~\ref{fig:fsa_F2Z2} is the word acceptor
of an automatic structure with fellow traveller constant 1 for $F_2$ over $\{a,b\}$; 
each element of the group has a unique representative in the language, 
and given two words $w,v \in L(M_1)$ 
and $y \in \{a^{\pm 1},b^{\pm}\}$
with $wy=_{F_2} v$, 
one of the words is a maximal prefix of the other,
and so the words $1$-fellow travel in $G$.

Similarly, the \fsa $M_2$ of Figure~\ref{fig:fsa_F2Z2} is the word acceptor
of an automatic structure with fellow traveller constant 2 for $\Z^2$ over $\{a,b\}$.
Again each element of the group has a unique representative in the language,
and given two words $w,v \in L(M_2)$ 
and $y \in \{a^{\pm 1},b^{\pm 1}\}$
with $wy=_{\Z^2}v$, corresponding vertices on the paths ${}_1w$ and ${}_1v$
in $\Cay(\Z^2,\{a,b\})$ are joined in the graph by a path of length 1 or 2.
The language $L(M_2)$ is the set of
all \shortlex minimal geodesic representatives of group elements; we call this a 
{\em shortlex automatic structure}\index{automatic!shortlex} for $\Z^2$.
Note that we can define a similar shortlex automatic structure for $\Z^n$.

In the
definition of automaticity given in \cite{ECHLPT} 
the condition A2 given above is replaced by the following condition:

\begin{description}
\item[A2']
For each $y \in X \cup \{ \{1\}\}$,
the set of pairs $(w,v)$ for which $w,v \in L$ and $wy =_G  v$
is a regular language when viewed as a set of strings over the alphabet
of pairs $\{(a,b):a,b \in X^\pm \cup \{\$\} \}$;
the character $\$$ is a {\em padding symbol} used to deal with the situation where $|w|\neq |v|$, in which case
the shorter of the two words is padded with $\$$s at its end.
\end{description}
The automata recognising the regular languages just described 
are known as the {\em multiplier automata}\index{multiplier automaton} of the automatic structure,
usually denoted by $M_y$, for each choice of $y$.

In the presence of A1 the conditions A2 and A2' are equivalent.
This is a consequence of the fact that the $k$-fellow travelling of a pair
of words $w,v$ can be tracked by an automaton whose state set $\cD$ 
corresponds to a set of
words of length at most $k$; a pair of words $(w,v)$ is accepted by that automaton so long as all the products
$w'^{-1}v'$ associated with prefixes
$w':= w(i),v':=v(i)$ of $w,v$ are represented by words in $\cD$. 
We call such an automaton a {\em word difference machine}\index{word difference!machine}, and the associated
set $\cD$ its corresponding set of {\em word differences}.

Where $G$ is automatic over its finite generating set $X$, with automatic
structure $L,k$, then $G$ is said to be {\em biautomatic}\index{biautomatic!group} (and $(L,k)$ to be a 
{\em biautomatic structure}\index{biautomatic!structure} for $G$) if 
the additional condition A3 is satisfied:
\begin{description}
\item[A3] 
for each $y \in X$, 
and for any $w,v \in L$ with  $yw=_G v$, 
		the paths ${}_yw,{}_1v$ $k$-fellow travel in $\Cay(G,X)$.
\end{description}
This further fellow traveller condition can be expressed in terms of
\fsa that recognise left multiplication, usually denoted by ${}_yM$, for $y \in X$.
It is an open question whether all automatic groups are biautomatic.

The concept of automaticity can be generalised to one of {\em asynchronous automaticity}\index{automatic!asynchronously}
by replacing the fellow traveller condition by an {\em asynchronous fellow travel condition}\index{fellow travel!asynchronously}; for two words $w,v$ to asynchronously fellow travel within a group $G$ it is the distance between
vertices $w(j_i)$ and $v(k_i)$ that must be bounded, where, for some $m \geq \max(|w|,|v|)$, the sequences $(j_0,j_1,\ldots,j_m)$ and $(k_0,\ldots,k_m)$
are both increasing sequences of integers, with $j_0=k_0=0$, $j_m=|w|$, 
$k_m=|v|$, and for $0\leq l <m$, $j_{l+1}-j_l$ and $k_{l+1}-k_l$ are in $\{0,1\}$.
Asynchronous automaticity is certainly a more general concept than automaticity,
and it is satisfied by examples such as the Baumslag--Solitar groups which are certainly not automatic.

It is fairly standard to call a language $L$ for a group $G$ that satisfies
the condition A2 (but not necessarily A1) a {\em combing}\index{combing} for $G$, and a 
language that satisfies both A2 and A3 a {\em bicombing}\index{bicombing} for $G$; however some
authors use these terms differently, e.g. impose additional (geometric) conditions on $L$.
Again, the fellow travelling condition can be replaced by an asynchronous one, in order to define asynchronous combings and bicombings.
The basic properties of combable groups\index{combable group} are studied 
in \cite{Bridson03}, where it is proved that non-automatic combable groups 
exist (answering a question posed in \cite{ECHLPT}), as well as combable groups that are not bicombable.

Given an automatic (or biautomatic) structure $(L,k)$
for a group $G$, it is straightforward (using well known properties of 
regular languages, such as the ``Pumping lemma'' \cite{HopcroftUllman}) to modify 
the structure and
achieve a new automatic structure with particular properties.
For instance we can achieve a structure in which every element of $G$ has
a unique representative (a structure {\em with uniqueness}\index{automatic!with uniqueness}) a {\em prefix closed}\index{automatic!prefix closed} structure 
in which the language contains every prefix of every one of its
elements,
a {\em quasigeodesic}\index{automatic!quasigeodesically} structure in
which every element is represented by a $(\lambda,\epsilon)$-quasigeodesic,
We note that a word $w$ representing an element $g$ of a group $G$ is called a 
{\em $(\lambda,\epsilon)$-quasigeodesic}\index{quasigeodesic}
if every subword $w'$ of $w$ has length at most $\lambda |g'| + \epsilon$, where $g'$ is the element represented by $w'$.
Note that it is not clear that all combinations of properties can be achieved within the language of a single
automatic structure. In particular it is an open question \cite{ECHLPT} whether, given an automatic structure for a group $G$, an automatic structure can be derived for $G$ that is both prefix closed and has uniqueness.

Note that the definitions of automaticity and biautomaticity are independent 
of choice of generating set; that is if $G$ has an automatic structure
over a finite generating set $X$, then it has one over any other finite generating set $Y$.

\subsection{Basic properties of automatic groups}
\label{sec:properties}

Some properties of automatic groups can be deduced very easily from
basic properties of regular languages, which imply certain constraints
on their Cayley graphs.
In particular any automatic group is finitely presented\index{finitely presented} with soluble word problem\index{word problem},
and quadratic Dehn function\index{Dehn function}, while any biautomatic group has soluble conjugacy 
problem\index{conjugacy problem}. 
We recall that the word problem is soluble in $G$ if an algorithm exists that can decide whether or not any input word represents the identity, and the
conjugacy problem is soluble if an algorithm exists that can decide whether or not two input words represent elements that are conjugate within the group;
it is an open question whether the conjugacy problem is soluble for automatic groups.
It also is an open question whether the isomorphism problem\index{isomorphism problem}
is soluble for automatic groups, that is, whether an algorithm that was given as input automatic structures for a pair of groups $G,H$ could decide whether or not $G$ and $H$ were isomorphic. It is conjectured in \cite{ECHLPT} that this problem is insoluble. Note that it is soluble for hyperbolic groups\index{hyperbolic group} \cite{DahmaniGuirardel,Sela}.

In order to explain these statements in more detail, 
we use the language of van 
Kampen diagrams. Informally (essentially, following \cite{Johnson}), given a group
$G$ with presentation $\langle X \mid R \rangle$ and a word $w$ over $X$ that 
represents the identity of $G$, we define a van Kampen diagram\index{van Kampen diagram} $\Delta_w$ for $w$ to be a finite, connected, directed, planar graph, with a selected {\em basepoint}, whose directed edges are labelled by elements of $X$, in such a way that the boundary of every face of 
the graph (known as a {\em cell}) is labelled (from some starting point, in some orientation) by a word from $R$, while the boundary 
of the graph is labelled (from the basepoint) by $w$. As a directed, edge labelled graph, $\Delta_w$ maps (not necessarily injectively) into the Cayley 
graph $\Cay(G,X)$.
The {\em area of the diagram} $\Area(\Delta_w)$ is defined to be the number of cells it contains; of course its value is dependent on the set $R$, and would change if $R$ were changed.

We define the {\em area of the word} $w$ to be the minimum of the areas of all
van Kampen diagrams that represent $w$. 
And we define the {\em Dehn function}\index{Dehn function} 
(or {\em isoperimetric function}\index{isoperimetric function}) for $G$, $f: \N \rightarrow \N$,
to be the function for which $f(n)$ is the maximum area of all words $w$ of length $n$ over $X^\pm$ that represent
the identity of $G$.
Although the precise form of the Dehn function depends on the chosen presentation for $G$, it can be shown that two Dehn functions corresponding to different presentatives are related by a natural notion of equivalence, and in particular if one is polynomially bounded, then both are, by polynomials of the same degree.

\begin{proposition}
\label{prop:fp_wp}
Every automatic group is finitely presented, with a quadratic upper bound on the Dehn function,
and hence soluble word problem.
\end{proposition}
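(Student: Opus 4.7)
The plan is to extract a finite presentation and a quadratic isoperimetric bound directly from the fellow-travelling property. Fix an automatic structure $(L,k)$ for $G$ over $X$ and choose a section $\sigma : G \to L$ with $\sigma(1) = \emptyword$ (adjoining $\emptyword$ to $L$ if necessary, which preserves the automatic structure). Let
\[
R \;=\; \{\, r \in (X^\pm)^* : |r| \leq 2k+2,\ r =_G 1 \,\},
\]
a finite set, and set $P := \langle X \mid R \rangle$. There is a canonical surjection $P \twoheadrightarrow G$, so it suffices to prove that every word $w$ of length $n$ with $w =_G 1$ admits a van Kampen diagram over $R$ of area $O(n^2)$; this simultaneously establishes $P \cong G$ and the quadratic Dehn bound.

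Given such $w = x_1 \cdots x_n$, set $g_i := w(i)$ and $\sigma_i := \sigma(g_i)$, so $\sigma_0 = \sigma_n = \emptyword$. Condition A2 gives that ${}_1\sigma_i$ and ${}_1\sigma_{i+1}$ $k$-fellow travel in $\Cay(G,X)$, so for each $j$ the vertices $\sigma_i(j)$ and $\sigma_{i+1}(j)$ are joined by a path of length at most $k$. Using these short paths as ``rungs'', the planar region bordered by ${}_1\sigma_i$, ${}_1\sigma_{i+1}$ and the edge labelled $x_{i+1}$ can be tiled by at most $\max(|\sigma_i|,|\sigma_{i+1}|)+1$ cells, each with boundary a word of length at most $2k+2$ representing $1$, hence labelled by an element of $R$. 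Stacking the $n$ resulting corridors side by side, all meeting at the top vertex, yields a van Kampen diagram $\Delta_w$ over $R$ with boundary $w$, and
\[
\Area(\Delta_w) \;\leq\; \sum_{i=0}^{n-1}\bigl(\max(|\sigma_i|,|\sigma_{i+1}|) + 1\bigr).
\]

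To promote this to a quadratic bound, I need the technical lemma that the elements of $L$ are uniformly quasi-geodesic: there exist constants $\lambda, \mu$ with $|\sigma(g)| \leq \lambda|g| + \mu$ for every $g \in G$. Granting this, $|g_i| \leq \min(i,n-i) \leq n/2$ forces $|\sigma_i| \leq \lambda n/2 + \mu$, so $\Area(\Delta_w) = O(n^2)$. Solubility of the word problem then follows: a finite presentation with a computable (here quadratic) isoperimetric bound has soluble word problem by enumerating van Kampen diagrams of bounded area, though in practice one would compute the $L$-representative of $w$ directly using the word acceptor and multiplier automata and test equality with $\emptyword$.

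The main obstacle, and the only non-routine step, is the quasi-geodesic lemma. The natural approach is a pumping argument on a product automaton that combines two copies of the word acceptor with a word-difference machine tracking displacements within the ball of radius $k$. Applied to pairs $(\sigma(g_i),\sigma(g_{i+1}))$ along a geodesic to $g$, finiteness of the combined state space should allow one to excise common-state loops simultaneously from both words without leaving $L$ or altering the represented group elements, thereby bounding $\bigl||\sigma(g_{i+1})| - |\sigma(g_i)|\bigr|$ by a constant and hence $|\sigma(g)|$ linearly in $|g|$.
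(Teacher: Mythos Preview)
Your argument follows essentially the same route as the paper's: build corridors between consecutive $L$-representatives of the prefixes of $w$, tile each corridor into cells of perimeter at most $2k+2$ using the fellow-traveller rungs, and invoke a linear bound on $|\sigma_i|$ to sum to $O(n^2)$. The paper handles the linear bound by simply declaring at the outset that one may pass to a quasigeodesic automatic structure (as noted earlier in Section~\ref{sec: defn}), and adds the fixed representative $w_0$ of $1$ to $R$ rather than adjoining $\emptyword$ to $L$; these are cosmetic differences.

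The one genuine slip is in your sketch of the quasi-geodesic lemma. You propose to run a product automaton on the pair $(\sigma(g_i),\sigma(g_{i+1}))$ and ``excise common-state loops simultaneously from both words without leaving $L$ or altering the represented group elements''. That cannot work: deleting a subword of $\sigma(g_i)$ between two visits to the same product-state yields a word that is still in $L$ and still fellow-travels its partner, but it now represents a \emph{different} group element (the excised segment is not in general trivial in $G$). The standard argument instead pumps only in the \emph{padded tail}: if $|\sigma(g_{i+1})| - |\sigma(g_i)|$ exceeds the number of states of the multiplier automaton $M_y$, then after $\sigma(g_i)$ is exhausted the automaton reads pairs of the form $(\$,a)$ and must repeat a state, so one can shorten $\sigma(g_{i+1})$ alone to obtain another $L$-representative of $g_{i+1}$. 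This contradicts uniqueness (or minimality of length) of the section $\sigma$, so one must first pass to an automatic structure with uniqueness --- which is exactly the preliminary reduction the paper makes when it says ``we may assume that $L$ consists of quasigeodesics''.
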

We sketch the proof, which is that of \cite[Theorem 5.2.13]{HRR}.
\begin{proof}
We suppose that $L,k$ are the language and fellow traveller constant of an
automatic structure over a generating set $X$; we may assume that $L$ consists of quasigeodesics.
Suppose that $w=a_1\cdots a_n$ is a word of length $n$ representing the identity.
Now we define words $w_0,\ldots ,w_n$ as follows.
We define $w_0=w_n$ to be a representative in $L$ of $1$, and for each $i=1,\ldots,n-1$ we choose $w_i$ to be a representative in $L$ of the prefix of $w$ of length $i$; 
since $L$ is quasigeodesic, we can choose $w_i$ of length 
at most $|w_0|+Ci$, for some constant $C$ of the automatic structure.
We start with a disk within the plane whose boundary is labelled by $w$, and
divide it into cells to
form a van Kampen diagram $\Delta_w$ with boundary $w$ as follows.
First, a loop labelled by $w_0$ connects the basepoint to itself, while for each $i$ a path labelled $w_i$ connects the basepoint to the point on the boundary distance $i$ along 
$w$, and none of these paths cross each other.
Then, since the paths ${}_1w_{i-1}$ and ${}_1w_i$ in $\Cay(G,X)$ fellow travel at distance at most $k$, we can construct paths of length at most $k$ that
connect corresponding vertices on the paths within the disk labelled by those two words,  
and hence divide the region between the two paths into cells each of length at most $2k+2$.
In this way we divide the interior of the diagram into 
a number of cells labelled by words of length at most $2k+2$.
together with two cells labelled by the word $w_0=w_n$, as illustrated in 
Figure~\ref{fig:quadratic_dehn}.

\begin{figure}[htb]
\setlength{\unitlength}{0.8pt}
\begin{picture}(240,240)(-80,-120)
\put(0,0){\circle*{10}}
\put(100,0){\oval(200,200)}
\put(115,100){\vector(1,0){0}} \put(120,100){\vector(1,0){0}} 
\put(120,103){\small $w$}
\put(0,18){\oval(80,30)[rb]}
\put(40,18){\vector(0,1){0}} \put(42,13){\small $w_0$}
\put(22,18){\oval(36,30)[rt]}
\put(0,0){\line(2,3){22}}
\put(0,-18){\oval(80,30)[rt]}
\put(40,-18){\vector(0,-1){0}} \put(43,-23){\small $w_n$}
\put(22,-18){\oval(36,30)[rb]}
\put(0,0){\line(2,-3){22}}
\put(0,25){\vector(0,1){0}} \put(-16,20){\small $a_1$}
\put(70,18){\vector(0,1){0}} \put(72,13){\small $w_1$}
\put(0,20){\oval(140,40)[r]}
\put(0,60){\vector(0,1){0}} \put(-16,55){\small $a_2$}
\put(90,50){\vector(0,1){0}} \put(92,45){\small $w_2$}
\put(0,40){\oval(180,80)[r]}
\put(40,100){\vector(1,0){0}} \put(40,103){\small $a_3$}
\put(110,50){\vector(0,1){0}} \put(112,45){\small $w_3$}
\put(70,50){\oval(80,100)[r]}
\put(115,60){\line(1,0){5}} \put(125,60){\line(1,0){5}} 
\put(135,60){\line(1,0){5}} \put(145,60){\line(1,0){5}} 
\put(115,20){\line(1,0){5}} \put(125,20){\line(1,0){5}} 
\put(135,20){\line(1,0){5}} \put(145,20){\line(1,0){5}} 
\put(75,-20){\line(1,0){5}} \put(85,-20){\line(1,0){5}} 
\put(95,-20){\line(1,0){5}} \put(105,-20){\line(1,0){5}} 
\put(115,-20){\line(1,0){5}} \put(125,-20){\line(1,0){5}} 
\put(135,-20){\line(1,0){5}} \put(145,-20){\line(1,0){5}} 
\put(115,-30){\oval(90,60)[b]}
\put(35,-30){\oval(70,60)[tr]}
\put(160,-30){\line(0,1){130}}
\put(200,40){\vector(0,-1){0}} \put(205,40){\small $a_{n-4}$}
\put(115,-40){\oval(130,60)[b]}
\put(25,-40){\oval(50,80)[tr]}
\put(200,-40){\oval(40,90)[tl]}
\put(150,-40){\oval(200,80)[bl]}
\put(150,-100){\oval(60,40)[tr]}
\put(160,-20){\vector(0,1){0}} 
\put(180,-20){\vector(0,1){0}} 
\put(100,-40){\oval(100,100)[bl]}
\put(80,-100){\oval(60,20)[tr]}
\put(200,-50){\vector(0,-1){0}} \put(205,-50){\small $a_{n-3}$}
\put(160,-80){\vector(1,0){0}} \put(158,-77){\small $w_{n-3}$}
\put(140,-100){\vector(-1,0){0}} \put(140,-110){\small $a_{n-2}$}
\put(100,-90){\vector(1,0){0}} \put(98,-87){\small $w_{n-2}$}
\put(50,-100){\vector(-1,0){0}} \put(50,-110){\small $a_{n-1}$}
\put(8,-50){\oval(84,80)[br]}
\put(15,-90){\vector(-1,0){0}}
\put(15,-87){\small $w_{n-1}$}
\put(0,-45){\vector(0,1){0}}\put(-16,-50){\small $a_n$}
{\color{blue}
\put(17,25.5){\line(0,1){14.5}} \put(17,40){\line(3,2){57}}
\put(35,30){\line(0,1){10}} \put(35,40){\line(3,1){55}}
\put(40,55){\small $\leq k$}
\put(41,18){\line(2,1){26}} \put(67,31){\line(1,0){43}}
\put(38,8){\line(1,0){64}}
\put(90,58){\line(1,0){20}}
\put(74,78){\line(3,1){30}}
\put(0,40){\line(1,1){40}}
\put(40,80){\line(2,1){40}}
\put(0,80){\line(4,1){80}}
\put(38,-8){\line(1,0){24}}
\put(36,-30){\line(1,0){34}}
\put(50,-48){\line(3,1){20}}
\put(26,-33){\line(2,-3){22}}
\put(22,-33){\line(0,-1){57}}
\put(10,-15){\line(0,-1){75}}
\put(20,-90){\line(1,0){60}}
\put(80,-55){\line(0,-1){35}}
\put(110,-60){\line(0,-1){40}}
\put(140,-60){\line(0,-1){20}}
\put(140,-80){\line(-3,-2){30}}
\put(160,-25){\line(1,-1){20}} \put(180,-45){\line(0,-1){55}}
\put(160,10){\line(1,-1){20}} 
\put(160,45){\line(1,-1){40}}
\put(170,55){\small $\leq k$}
\put(160,85){\line(1,-2){40}}
}
\end{picture}
\caption{Van Kampen diagram for a representative of the identity in an automatic group}
\label{fig:quadratic_dehn}
\end{figure}
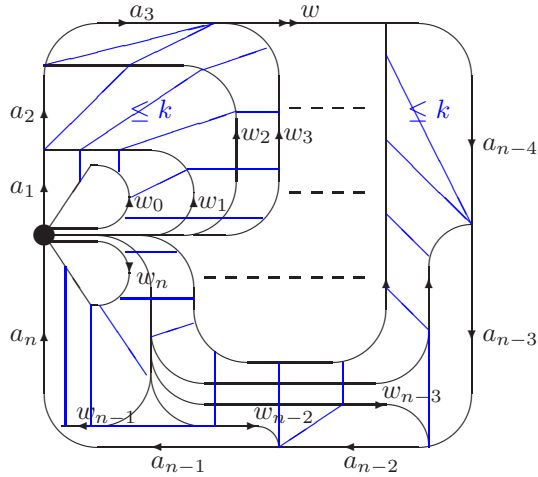

Using the bounds on $|w_i|$, we see that the total number of cells is bounded by a quadratic function of $n$.
We now define $R$ to be the set of all words of length up to $2k+2$ that represent the identity, together with the word $w_0$. Then $\langle X \mid R \rangle$ is a finite presentation for $G$,
and, relative to $R$, $\Delta_w$ has quadratic area.
\end{proof}
A similar argument proves an exponential upper bound on the Dehn function
for any asynchronously automatic group; it is an open question \cite{ECHLPT} whether a polynomial time solution to the word problem must exist. 

The most straightforward way to prove a group non-automatic is probably to 
show that it has a Dehn function that is above quadratic. 
This argument proves easily the non-automaticity of the Baumslag--Solitar groups
$\langle a, b \mid ba^pb^{-1} = a^q$ for which $p,q>0$ and $p \neq q$, 
since they have exponential Dehn function; in fact they provide 
examples of non-automatic groups that are asynchronously automatic.

But there are many groups with quadratic Dehn functions that are known by other methods  not to be automatic.

The non-automaticity of
the groups $\SL_n(\Z)$ for $n \geq 3$ is proved in \cite{ECHLPT}.  
The group $\SL_2(\Z)$ is well known to be virtually free, and hence hyperbolic\index{hyperbolic group} with linear Dehn function. The group $\SL_3(\Z)$ has exponential Dehn function 
and so is certainly non-automatic.
However, the existence of a quadratic Dehn function for $\SL_n(\Z)$ with $n \geq 5$ was proved in \cite{Young} in 2013 (and had been conjectured by Thurston, in fact for $n \geq 4$).
In order to prove non-automaticity of the group for all $n \geq 3$, Epstein and Thurston derived higher dimensional isoperimetric inequalities\index{isoperimetric function!higher dimensional} that would have 
to hold in any combable group\index{combable group} of isometries acting properly discontinuously 
with compact quotient on a $k$-connected Riemannian manifold \cite[Theorem 10.3.5]{ECHLPT}.
The non-automaticity of $\SL_n(\Z)$ now follows by the construction
of a proper discontinous cocompact action on a suitable contractible manifold,
and the demonstration that a higher dimensional isoperimetric inequality fails; 
hence $\SL_n(\Z)$ is proved to be non-combable and so non-automatic. 

Van Kampen diagrams can also be used to prove solubility of the conjugacy 
problem\index{conjugacy problem} in any biautomatic group, by demonstrating the existence
of a conjugator of bounded length.
The proof below, valid for any bicombable group,
is taken from
\cite{Short90}; an earlier result of \cite{GerstenShort91} constructs an automaton out of the biautomatic structure to solve
the problem.

\begin{proposition}
\label{prop:cp}
Given a biautomatic group $G$, any two words $u,v$ representing
conjugate elements are conjugate by an element of length at most $a^{|u|+|v|}$,
	for some constant $a$ (depending only on the biautomatic structure).
Hence any biautomatic group has soluble conjugacy problem.
\end{proposition}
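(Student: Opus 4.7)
The plan is to establish the quantitative bound on the conjugator by combining fellow traveller estimates from the biautomatic structure with a pigeonhole argument, and then to derive decidability of the conjugacy problem by a bounded search that invokes the word problem solution of Proposition~\ref{prop:fp_wp}.

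First fix a biautomatic structure $(L,k)$ for $G$, modified so that $L$ is quasigeodesic and has uniqueness. Let $g$ be a shortest conjugator with $ug =_G gv$, let $w = x_1\cdots x_n \in L$ represent $g$, and pick $w^* \in L$ representing the common element $ug = gv$. The central picture is a quadrilateral in $\Cay(G,X)$ whose two parallel sides are ${}_1w$ (running from $1$ to $g$) and ${}_uw$ (running from $u$ to $ug$). The key geometric claim is that these two sides fellow travel at distance at most $k' := (|u|+|v|)k$, so that for every prefix $w(i)$ of $w$ the conjugate $h_i := w(i)^{-1}uw(i)$ satisfies $|h_i|\leq k'$ in $G$.

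To prove the claim I would use ${}_1w^*$ as an intermediary between the two sides of the quadrilateral. Writing $v = y_1\cdots y_s$ and iterating A2 along the single letters of $v$ (each step passes to the $L$-representative of a further right-multiple of $g$), the paths ${}_1w$ and ${}_1w^*$ fellow travel at distance at most $|v|k$. Writing $u = z_1\cdots z_r$ and iterating A3 along the single letters of $u$, with the left-invariance of the Cayley metric used to align successive prefixes of $u$, the paths ${}_uw$ and ${}_1w^*$ fellow travel at distance at most $|u|k$. The triangle inequality applied vertex-by-vertex then gives $d(w(i), uw(i)) \leq (|u|+|v|)k$, which is exactly the bound $|h_i|\leq k'$.

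Now apply pigeonhole to the sequence $h_0 = u, h_1, \ldots, h_n = v$, each of which lies in the ball of radius $k'$ in $\Cay(G,X)$; this ball has cardinality at most $|X^\pm|^{k'+1} \leq a^{|u|+|v|}$ for a suitable constant $a$ depending only on the biautomatic structure. A direct calculation shows that whenever $h_i = h_j$ with $i<j$, the word $w' := x_1\cdots x_i x_{j+1}\cdots x_n$ is strictly shorter than $w$ and again satisfies $(w')^{-1}uw' =_G v$; choosing $w$ to minimise the length of an $L$-representative over all conjugators, and using the quasigeodesic bound to pass from $w'$ back into $L$, one rules out any such collision, forcing $n \leq a^{|u|+|v|}$ and hence $|g|\leq a^{|u|+|v|}$. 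The most delicate step is verifying that the shortening genuinely contradicts minimality in the merely quasigeodesic case, which is where the interplay between word length in $G$ and length of $L$-representatives requires care. Decidability of the conjugacy problem then follows immediately: given inputs $u,v$ one enumerates all group elements of length at most $a^{|u|+|v|}$ (easily done via the word acceptor) and tests each candidate $g$ by running $gug^{-1}v^{-1}$ through the word problem algorithm of Proposition~\ref{prop:fp_wp}.
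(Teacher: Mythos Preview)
Your approach is essentially the paper's: the quadrilateral with sides ${}_1w$ and ${}_uw$, the intermediary $w^*\in L$ representing $ug=gv$, the iterated use of A2 along the letters of $v$ and of A3 along the letters of $u$ to get the $(|u|+|v|)k$ fellow-travel bound, and the pigeonhole-plus-excision on the conjugates $h_i=w(i)^{-1}uw(i)$ are exactly what the paper does (phrased there via chords $d_i$ in a van Kampen diagram).

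The one point where you diverge is in the shortening step, and the concern you flag is a genuine difficulty with \emph{your} formulation, though not with the argument itself. By minimising the length of an $L$-representative and then trying to push the excised word $w'$ back into $L$ via the quasigeodesic inequality, you run into exactly the problem you note: $\lambda|w'|+\epsilon$ need not be smaller than $|w|$. The paper avoids this entirely, and so can you, by observing that the excision is self-contained: if $w' = x_1\cdots x_i x_{j+1}\cdots x_n$, then for $m\leq i$ one has $h'_m=h_m$, while for $m>i$ a direct calculation using $h_i=h_j$ gives $h'_m=h_{j+(m-i)}$. Thus the new sequence $(h'_m)$ is literally a subsequence of the old one, so the bound $|h'_m|\leq k'$ persists without any appeal to $L$, and you may iterate the excision until no repeats remain. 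This is precisely what is encoded in the paper's picture of ``deleting the central part of the diagram'': the surviving chords are unchanged. With this observation you need neither the quasigeodesic nor the uniqueness modification of $L$.
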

\begin{proof}
We choose a  biautomatic structure $(L,k)$ over a finite generating set $X$, 
and suppose that the
words $u,v$ over $X^\pm$ represent conjugate elements of $G$.
	Let $N:=|X^\pm|^{k(|u|+|v|)}$. We find a conjugator of length at most $N$, and so $a=|X^\pm|^k$.

For suppose that an element $g \in G$ conjugates $u$ to $v$, that is that $gu=_G vg$, and
that $w,w' \in L$ represent the elements $g$ and $ug$, respectively.
	We consider the paths ${}_1w$, ${}_1w'$ and ${}_uw$ within the Cayley graph $\Cay(G,X)$, and see that the biautomaticity of $G$  ensures that ${}_1w$ 
	and ${}_1w'$ 
	fellow travel at distance at most $|u|k$, and that ${}_1w'$ and ${}_uw$
fellow travel at distance at most $|v|k$.
We deduce that we can construct a van Kampen diagram with boundary 
labelled by $wuw^{-1}v^{-1}$ in which chords of length at most $(|u|+|v|)k$ join boundary vertices
in corresponding positions on the two boundary subwords labelled by $w$,
as shown on the left hand side of Figure~\ref{fig:biaut_cp}. 
Where $|w|=n$, let $d_1,d_2,\ldots,d_{n-1}$ be the words that
label those chords.


\begin{figure}[htb]
\begin{minipage}[l]{2.2in}
\begin{picture}(200,210)
\put(10,10){\circle*{3}}
\qbezier(10,10)(-10,50)(10,100)
\put(-3,90){$u$}
\put(10,100){\vector(1,2){0}}
\qbezier(10,100)(30,150)(10,190)
\put(10,190){\circle*{3}}
\qbezier(10,190)(50,210)(100,190)
\put(110,186){\vector(2,-1){0}}
\put(100,193){$w$}
\qbezier(100,190)(150,170)(190,190)
\put(190,190){\circle*{3}}
\qbezier(190,100)(210,150)(190,190)
\put(190,100){\vector(1,2){0}}
\put(190,90){$v$}
\qbezier(190,10)(170,50)(190,100)
\put(190,10){\circle*{3}}
\qbezier(10,10)(50,30)(100,10)
\put(110,6){\vector(2,-1){0}}
\put(95,0){$w$}
\qbezier(100,10)(150,-10)(190,10)
\put(5,5){\makebox(0,0){$1$}}
\put(10,10){\line(1,1){180}}
\put(110,110){\vector(1,1){0}}
\put(110,100){$w'$}
\put(30,18){\circle*{3}} \put(30,198){\circle*{3}}
\put(26,26){\circle*{3}}
\qbezier(30,18)(10,60)(30,110) \qbezier(30,110)(50,150)(30,198)
\put(50,20){\circle*{3}} \put(50,200){\circle*{3}}
\put(43,43){\circle*{3}}
\qbezier(50,20)(30,80)(50,120) \qbezier(50,120)(70,155)(50,198)
\put(85,16){\circle*{3}} \put(85,196){\circle*{3}}
\put(86,86){\circle*{3}}
\qbezier(85,85)(105,135)(85,196) \qbezier(85,17)(75,55)(85,85)
\put(80,56){\vector(0,1){0}} 
\put(70,46){$d_i$} 
\put(130,1){\circle*{3}} \put(130,181){\circle*{3}}
\put(139,139){\circle*{3}}
\qbezier(130,70)(150,120)(130,181) \qbezier(130,2)(120,40)(130,70)
\put(125,40){\vector(0,1){0}} 
\put(115,30){$d_j$} 
\put(150,1){\circle*{3}} \put(150,180){\circle*{3}}
\put(157,157){\circle*{3}}
\qbezier(150,80)(170,130)(150,180) \qbezier(150,1)(140,50)(150,80)
\put(170,3){\circle*{3}} \put(170,183){\circle*{3}}
\put(173,173){\circle*{3}}
\qbezier(170,90)(190,130)(170,183)
\qbezier(170,3)(150,50)(170,90)
\end{picture}
\end{minipage}
\begin{minipage}[r]{2.2in}
\begin{picture}(200,200)(-60,0)
\put(10,10){\circle*{3}}
\qbezier(10,10)(-10,50)(10,100)
\put(-3,90){$u$}
\put(10,100){\vector(1,2){0}}
\qbezier(10,100)(30,150)(10,190)
\put(10,190){\circle*{3}}
\qbezier(10,190)(50,210)(85,196)
\put(75,199){\vector(2,-1){0}}
\put(65,205){$\hat{w}$}
\qbezier(85,196)(115,186)(145,205)
\put(145,205){\circle*{3}}
\qbezier(145,115)(165,165)(145,205)
\put(145,115){\vector(1,2){0}}
\put(145,104){$v$}
\qbezier(145,25)(125,65)(145,115)
\put(145,25){\circle*{3}}
\qbezier(10,10)(50,30)(85,16)
\put(75,19){\vector(2,-1){0}}
\put(62,10){$\hat{w}$}
\qbezier(85,16)(115,14)(145,25)
\put(5,5){\makebox(0,0){$1$}}
\put(30,18){\circle*{3}} \put(30,198){\circle*{3}}
\qbezier(30,18)(10,60)(30,110) \qbezier(30,110)(50,150)(30,198)
\put(50,22){\circle*{3}} \put(50,202){\circle*{3}}
\qbezier(50,22)(30,80)(50,120) \qbezier(50,120)(70,155)(50,202)
\put(85,16){\circle*{3}} \put(85,196){\circle*{3}}
\qbezier(85,85)(105,135)(85,196) \qbezier(85,17)(75,55)(85,85)
\put(80,50){\vector(0,1){0}} 
\put(65,35){$d_i = d_j$} 
\put(105,16){\circle*{3}} \put(105,193){\circle*{3}}
\qbezier(105,95)(125,145)(105,193) \qbezier(105,16)(95,65)(105,95)
\put(125,18){\circle*{3}} \put(125,196){\circle*{3}}
\qbezier(125,105)(145,145)(125,196)
\qbezier(125,18)(105,65)(125,105)
\end{picture}
\end{minipage}
\caption{Finding a conjugator of bounded length}
\label{fig:biaut_cp}
\end{figure}
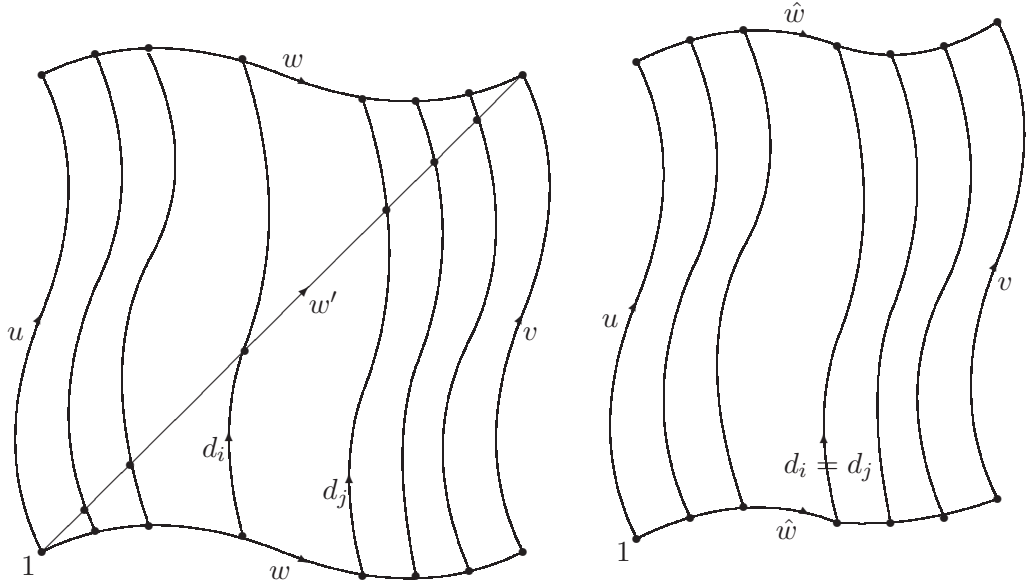

Now if $n>|X^\pm|^{(|u|+|v|)k}$, then for some $i,j$ we have $d_i=d_j$. In 
that case, where $\hat{w}$ is the word formed from $w$ by deleting its
middle section of length $j-i$, from its $(i+1)$-th to its $j$-th letter, 
we can form the van Kampen diagram with boundary word $\hat{w}u\hat{w}^{-1}v^{-1}$
shown on the right hand side of Figure~\ref{fig:biaut_cp} 
by deleting the central part of the diagram we already constructed for $wuw^{-1}v^{-1}$. 

\end{proof}


Various combinations of automatic groups are known to be automatic \cite{ECHLPT,BGSS}: these include
free products, direct products, certain amalgamated products and HNN extensions of automatic groups,
as well as subgroups of finite index in automatic groups, groups with
automatic groups as subgroups of finite index, quotients of automatic
groups by finite normal subgroups. Some, but not all, of these closure properties
also hold for biautomatic groups.
It is an open question whether direct factors of automatic groups must be automatic
(but the analogous result is proved for biautomatic groups \cite{Mosher97}).
It is also open \cite{ECHLPT} whether a group with a biautomatic group as a subgroup of
finite index must be biautomatic.

\subsection{Basic examples and non-examples}

\subsubsection{Virtually abelian groups, soluble groups}
\label{sec:soluble}

We already described shortlex automatic structures for the free abelian group 
$\Z^n$. In fact $\Z^n$ is also biautomatic, but with a different (less 
straightforward) language, and indeed so is every virtually 
abelian group. However it was already proved in \cite{ECHLPT} that an
automatic nilpotent group must be virtually abelian; the proof uses the fact that a regular language with polynomial growth cannot satisfy a (synchronous) fellow traveller property.
It was conjectured by Thurston that the same result must hold for an automatic
soluble group. That conjecture remains open, but it was proved
for automatic polycylic groups in \cite{Harkins}, using an embedding of a 
finite index subgroup of a polycyclic group of exponential growth as a lattice 
in an appropriate Lie group, where \cite[Theorem 10.3.5]{ECHLPT} about higher dimensional isoperimetric functions\index{isoperimetric function!higher dimensional} could be applied, 
which had previously been used to prove the non-automaticity of $\SL_n(\Z)$ for $n \geq 3$. 
Much more recently it was proved in \cite{Romankov} that biautomatic soluble groups must be virtually abelian.

\subsubsection{Hyperbolic groups}
\label{sec:hyperbolic}

Maybe the most natural examples of non-abelian automatic groups are provided by
the large family of {\em word hyperbolic} groups, which contains all finitely
generated free groups as well as the fundamental groups of all compact hyperbolic manifolds. 

A group $G$ with finite 
generating set $X$ is said to be {\em word hyperbolic}\index{hyperbolic group} if its Cayley
graph $\Cay(G,X)$ is a $\delta$-hyperbolic metric space, for some $\delta \geq 0$;
a geodesic metric space $(\cX,d)$ is $\delta$-hyperbolic
if for any triangle in $\cX$ with geodesic sides $\gamma_1,\gamma_2,\gamma_3$ and 
for any vertex $p$ on the side $\gamma_1$ there is a vertex $q$ on the 
union $\gamma_2 \cup \gamma_3$ of the other two sides for which
$d(p,q) < \delta$ (we say that triangles in $\cX$ are {\em $\delta$-slim}).
The property of being word hyperbolic is
independent of the choice of a finite generating set for $G$, although the value of $\delta$ is not.
The fundamental groups of compact hyperbolic manifolds give examples, as
do finitely generated free groups (which are $0$-hyperbolic with respect to free generating sets).

We note that there are many equivalent definitions of hyperbolicity for metric spaces (and hence for finitely generated groups), which are explained in \cite{AlonsoEtAl}. In particular there is a characterisation in terms of {\em thin} rather than {\em slim triangles} (and a linear relationship between the associated
parameters ``$\delta$'').

It is proved in \cite{ECHLPT} that a word hyperbolic group\index{hyperbolic group} $G$ is automatic
over any generating set $X$,
with an automatic structure whose language consists of all geodesic words over
the selected generating set.
The regularity of that set of geodesic words is equivalent
to the fact that the Cayley graph $\Cay=\Cay(G,X)$ contains finitely many
cone types.
For $g \in G$, represented by a geodesic word $w$,
we define the {\em cone} $C(g)$ (or $C(w)$) on the vertex $g$ of $\Cay$ to be the set of (geodesic) paths 
$\gamma$ within $\Cay$ starting at $g$ for which the concatenation $\eta\gamma$
of a geodesic path $\eta$ from $1$ to $g$ with $\gamma$ is also geodesic. The {\em cone type} $[C(g)]$ or $[C(w)]$ of the cone is defined to be the set of words that label the paths within it.
Now for $y \in X \cup X^{-1}$, if $wy$ is also geodesic then for any word $v$,
\[ v \in [C(wy)] \iff yv \in [C(w)]. \]
It follows that we can recognise the set of geodesic words over $X^\pm$ with an \fsa
whose states correspond to the cone types, with a transition from
$[C(w)]$ to $[C(wy)]$ on $y$ whenever $wy$ is geodesic, but otherwise to a single {\em failure state} (i.e. a non-accepting sink state).
We can illustrate this construction in the free abelian group $\Z^2$
with generating set $\{a,b\}$, where there are nine cone types $[C(w)]$,
defined by the nine geodesic words $\emptyword$, $a$, $b$, $a^{-1}$, $b^{-1}$, $ab$, $ab^{-1}$, $a^{-1}b$, $a^{-1}b^{-1}$,
and consisting of the nine possible sets of geodesic words in which each generator appears either only with positive exponent, or only with negative exponent, or not at all.
The \fsa is illustrated in Figure~\ref{fig:geofsa_Z2}. This automaton
is {\em not} part of an automatic structure for $\Z^2$; it cannot be 
since, for example, the vertices distance $i$ from the origin on the geodesic words $a^ib^i$ and $b^ia^i$ are distance $2i$ apart within the Cayley graph, and hence
this language does not satisfy a fellow travelling property.

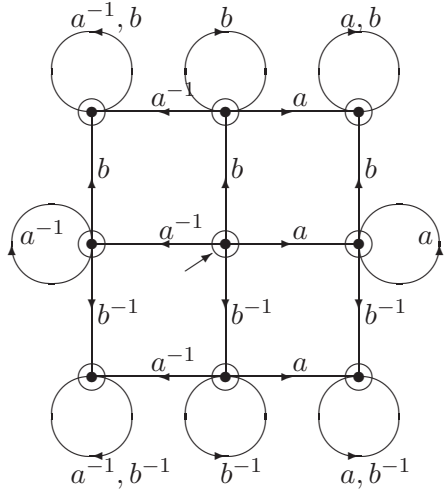
\begin{figure}[h]
\begin{picture}(400,180)(-200,-90)
\put(-15,-10){\vector(3,2){10}}
\put(0,0){\circle*{4}}
\put(0,0){\circle{10}}
\put(50,0){\circle*{4}}
\put(50,0){\circle{10}}
\put(-50,0){\circle*{4}}
\put(-50,0){\circle{10}}
\put(0,50){\circle*{4}}
\put(0,50){\circle{10}}
\put(0,-50){\circle*{4}}
\put(0,-50){\circle{10}}
\put(0,0){\line(1,0){50}}
\put(25,0){\vector(1,0){0}}
\put(25,2){$a$}
\put(0,0){\line(-1,0){50}}
\put(-25,0){\vector(-1,0){0}}
\put(-25,2){$a^{-1}$}
\put(0,0){\line(0,1){50}}
\put(0,25){\vector(0,1){0}}
\put(2,25){$b$}
\put(0,0){\line(0,-1){50}}
\put(0,-25){\vector(0,-1){0}}
\put(2,-30){$b^{-1}$}
\put(65,0){\oval(30,30)}
\put(80,0){\vector(0,1){0}}
\put(72,0){$a$}
\put(-65,0){\oval(30,30)}
\put(-80,0){\vector(0,1){0}}
\put(-77,0){$a^{-1}$}
\put(0,65){\oval(30,30)}
\put(0,80){\vector(1,0){0}}
\put(-2,82){$b$}
\put(0,-65){\oval(30,30)}
\put(0,-80){\vector(1,0){0}}
\put(-2,-90){$b^{-1}$}
\put(0,50){\line(1,0){50}}
\put(25,50){\vector(1,0){0}}
\put(25,52){$a$}
\put(50,50){\circle*{4}}
\put(50,50){\circle{10}}
\put(50,65){\oval(30,30)}
\put(50,80){\vector(1,0){0}}
\put(43,82){$a,b$}
\put(50,0){\line(0,1){50}}
\put(50,25){\vector(0,1){0}}
\put(52,25){$b$}
\put(0,-50){\line(1,0){50}}
\put(25,-50){\vector(1,0){0}}
\put(25,-48){$a$}
\put(50,-50){\circle*{4}}
\put(50,-50){\circle{10}}
\put(50,-65){\oval(30,30)}
\put(50,-80){\vector(1,0){0}}
\put(43,-90){$a,b^{-1}$}
\put(50,0){\line(0,-1){50}}
\put(50,-25){\vector(0,-1){0}}
\put(52,-30){$b^{-1}$}
\put(0,50){\line(-1,0){50}}
\put(-25,50){\vector(-1,0){0}}
\put(-28,52){$a^{-1}$}
\put(-50,50){\circle*{4}}
\put(-50,50){\circle{10}}
\put(-50,65){\oval(30,30)}
\put(-50,80){\vector(-1,0){0}}
\put(-58,82){$a^{-1},b$}
\put(-50,0){\line(0,1){50}}
\put(-50,25){\vector(0,1){0}}
\put(-48,25){$b$}
\put(0,-50){\line(-1,0){50}}
\put(-25,-50){\vector(-1,0){0}}
\put(-28,-48){$a^{-1}$}
\put(-50,-50){\circle*{4}}
\put(-50,-50){\circle{10}}
\put(-50,-65){\oval(30,30)}
\put(-50,-80){\vector(-1,0){0}}
\put(-58,-90){$a^{-1},b^{-1}$}
\put(-50,0){\line(0,-1){50}}
\put(-50,-25){\vector(0,-1){0}}
\put(-48,-30){$b^{-1}$}
\end{picture}
\caption{\fsa recognising geodesics in $\Z^2$}
\label{fig:geofsa_Z2}
\end{figure}

Given the finiteness of the set of cone types in a word hyperbolic group\index{hyperbolic group},
biautomaticity of any word hyperbolic group now follows
once it is observed that the fellow travelling of two geodesic words 
with common (or adjacent) start and end vertices can be derived from the
slimness of triangles.
In fact, it is proved by Papasoglu \cite{Papasoglu} that this fellow traveller condition
characterises word hyperbolic groups,
and hence so does the existence of a (bi)automatic structure that consists
of all geodesic words.
A procedure to test for hyperbolicity that is based on this result is 
described in \cite{Wakefield}.
Starting with a shortlex automatic structure $(L,k)$ for a group $G$ over $X$, 
the procedure attempts to construct an automatic structure $(\widehat{L},\widehat{k})$ with $\widehat{L} \supset L$ and $\widehat{k}\geq k$, and such that
$\widehat{L}$ contains all geodesic words over $X^\pm$. 
It will terminate with such a structure precisely when $G$ is hyperbolic.
An improved procedure, based on the same result was developed by Holt and Epstein \cite{EpsteinHolt01} and implemented in \kbmag.

The fundamental groups of finite volume hyperbolic manifolds (geometrically 
finite hyperbolic groups\index{hyperbolic group!geometrically finite}) were proved biautomatic by Epstein \cite{ECHLPT}, 
with a further biautomatic structure subsequently described by Lang 
\cite{Lang}. 

Geometrically finite hyperbolic groups were the motivating examples for Bowditch's  
definition \cite{Bowditch} of  a group hyperbolic relative to a collection of subgroups;
a geometrically finite hyperbolic group is hyperbolic relative to a collection of abelian groups\index{hyperbolic group!relatively}. The major part of the definition of relative hyperbolicity 
is the requirement that the Cayley graph of a group 
hyperbolic relative to a collection $\cH$ of subgroups becomes hyperbolic 
after the contraction of edges within left cosets of subgroups in $\cH$. However weaker and stronger versions of the definition exist depending on whether or not a 
condition of {\em bounded coset penetration} is required to hold. Under the
stronger definition (studied in \cite{Osin}) it is proved, in particular 
in \cite{AntolinCiobanu}, that groups hyperbolic relative to shortlex 
biautomatic 
subgroups are themselves shortlex biautomatic. The shortlex biautomaticity of geometrically finite hyperbolic groups is a consequence of this result.

A further generalisation of hyperbolic groups is provided by
semihyperbolic groups\index{hyperbolic group!semihyperbolic}\index{semihyperbolic group}, which were introduced 
by Bridson and Alonso in \cite{AlonsoBridson}; the class contains all biautomatic groups (hence all hyperbolic groups\index{hyperbolic group}) and all CAT(0) groups\index{CAT(0)!group} (see Section~\ref{sec:actions}).
A group $G$ with finite generating set $X$ is defined to be {\em weakly semihyperbolic} if $\Cay(G,X)$ admits a bounded quasi-geodesic bicombing 
(with a unique combing path $s_{g_1,g_2}(t)$ identified between any pair $g_1,g_2$ of vertices of the graph), and  {\em semihyperbolic} if it has such a bicombing that is equivariant under the action of $G$ (so that 
$g.s_{g_1,g_2}(g)=s_{gg_1,gg_2}(t)$). This class of groups satisfies many closure properties, and all groups within it are finitely presented, with soluble word\index{word problem} and conjugacy problems\index{conjugacy problem}.

\subsubsection{Fundamental groups of compact 3-manifolds}

It is proved in \cite{ECHLPT} that the fundamental groups of
compact 3-manifolds\index{3-manifold} based on
six of Thurston's eight model geometries for compact 3-manifolds \cite{Thurston3DGT} admit automatic structures. But it is also proved that the fundamental groups of closed manifolds based on
the \Nil and \Sol geometries (which are non-abelian, nilpotent and soluble, respectively) cannot even be asynchronously automatic \cite{ECHLPT,Brady93}.

However, using combination theorems for automatic groups, it can be proved 
(as in \cite[Theorem 12.4.7]{ECHLPT}, but our wording is slightly different) that an orientable,
connected, compact 3-manifold with incompressible toral boundary
whose prime factors have JSJ decompositions containing only hyperbolic pieces
has automatic fundamental group.
It was proved in \cite[Theorem B]{BridsonGilman} that the fundamental group of a manifold as above in which manifolds based on \Nil and \Sol are allowed within the JSJ 
decomposition, while not automatic, still admits an asynchronous combing 
based on an indexed language\index{language!indexed}
\cite{Aho}.

\section{Computing with automatic groups}
\label{sec:computing}
\subsection{Building automatic structures}
The original motivation for the definition of automatic groups was
computational, and so it was important from the beginning of the subject 
to be able
to construct automatic structures, that is, given a presentation for a group $G$,
to have a mechanism for building the word acceptor\index{word acceptor} and multiplier automata\index{multiplier automaton} of
an associated automatic structure. 
Software to build these automata was developed at the University of Warwick,
and the procedure used is described in \cite{EHR}. The original programs were subsequently rewritten by Holt, and released within his \kbmag package \cite{kbmag},
now available within both \gap and \magma computational systems \cite{gap,magma}.

The basic procedure is the same in both versions (the ideas are due to Holt)
and we describe it briefly now, but refer the reader to \cite{EHR} or
\cite{HoltKbmag} for more details.

A presentation for a group $G$ over a finite generating set $X$ is input, 
together with an ordering of the set $X^\pm$. The procedure attempts to prove
$G$ to be shortlex automatic over $X$ (with the given ordering) by first 
constructing a set of automata consisting of $W$ and $M_y$ for $y \in X^\pm \cup \{\emptyword\}$, and then
attempting to verify that those automata are indeed the automata 
of a shortlex automatic structure. If verification tests fail, some looping
is possible within the procedure, and indeed that looping could continue
indefinitely (or at least until the computer runs out of resources). 
If all verification tests pass, then the procedure will have
verified the shortlex automaticity of $G$ by construction and checking of a shortlex 
automatic structure.

So the procedure may succeed in proving shortlex automaticity of $G$.
But if it fails, it has certainly not proved that $G$ is not automatic,
or even that $G$ is not shortlex automatic, but rather it suggests that 
$G$ is unlikely to be shortlex automatic over the given generating set $X$,
with the given ordering of the elements of $X$.
We note that the question of automaticity for a finitely presented group is 
undecidable in general; this follows from the undecidability of questions such as triviality for a group.
We note too that it is an open question \cite{ECHLPT} whether every automatic group must be shortlex automatic with respect to some ordered generating set.

The first step of the procedure to prove shortlex automaticity is the 
construction of a rewrite
system  $\cR$ from the group presentation that is compatible with the shortlex
order. By definition, $\cR$ is a set of substitution rules 
$\rho: u \rightarrow v$, for $u,v \in (X^\pm)^*$, and with $v \slexle u$;
in order that $\cR$ encodes the presentation we require that
every relator from the
group presentation is a cyclic conjugate of the product $uv^{-1}$  or its inverse for at least one such rule.

The next step is to run the Knuth--Bendix procedure for a while on $\cR$.
The Knuth--Bendix procedure (described in \cite{HRR}) is a general procedure that,
given as input a rewrite system $\cR$ for strings compatible with a partial order, modifies
it by adding rules that are consequences of existing rules and deleting rules that 
have become redundant, in order to produce a new rewrite system.
The procedure attempts to build a finite {\em complete} system,
for which any input word $w$ can be rewritten 
after a finite number of steps to a unique irreducible word $w'$ (where 
irreducible means that $w'$ cannot be rewritten further).
However with this goal the procedure may never terminate; all that is guaranteed is
that after bounded time the modified system must
contain enough rules to reduce any word up to some bounded length to an irreducible.

In fact the procedure to construct a shortlex automatic structure
for $G$ does not need the Knuth--Bendix procedure to terminate on the input
rewrite system $\cR$. Instead, while the Knuth--Bendix procedure is running it
accumulates the set $\cD$ of {\em word differences}\index{word difference} $u(i)^{-1}v(i)$ and their
inverses (reduced according to the current modification of $\cR$) 
that correspond to prefixes of the rules $u \rightarrow v$ in the system.
Where $u=u_1\cdots u_m$, and $v=v_1\cdots v_{m'}$, a transition is added from
each word difference $u(i)^{-1}v(i)$ to $u(i+1)^{-1}v(i+1)$, creating a 
{\em word difference machine}\index{word difference!machine} that can recognise fellow travelling with respect to $\cD$. 

The Knuth--Bendix procedure is paused when it seems that the set $\cD$
and the associated automaton have stabilised. And then a candidate word 
acceptor $\WA$\index{word acceptor} is constructed, designed to reject a word $u$ if a string $v$ exists with $v <_{\rm slex} u$ for which
$(u,v)$ fellow travels according to $\cD$ while also the word difference 
$u^{-1}v$ reduces, according to the current rewrite system, to the empty word.

Similarly, multiplier automata\index{multiplier automaton} are constructed for each $y \in X^\pm \cup \{\emptyword\}$, using a  direct product construction on automata to recognise 
pairs of words $u,v$ for which $u,v \in L(W)$, $(u,v)$ fellow travels according to $\cD$,
while also the word difference 
$u^{-1}v$ reduces, according to the current rewrite system, to $y$.

Now a series of elementary tests is applied to the candidate automata. If some of these tests fail, then $\cD$ has been proved to be inadequate, and the Knuth--Bendix
procedure is restarted. If and when those tests are passed, further tests 
known as {\em axiom checking} are applied, and a positive result for these tests proves
the automata to provide a shortlex automatic structure for $G$.
If the axiom checks fail then the procedure is abandoned.

\subsection{Calculation using the automatic structure}

Once an automatic structure has been constructed for a group $G$,
much can be computed using the automata of that structure.
Various of these functions are available within the \kbmag package \cite{kbmag}.

It is straightforward to enumerate the language of a finite state automaton.
Hence we can enumerate a set of representative words for an automatic group, with unique representation if necessary (recall that once an automatic structure has been derived, a
structure with unique representation can be derived from that).

For any regular language $L$ the generating function $\sum_{n=0}^\infty s_L(n)x^n$, where $s_L(n)$ denotes the number of words of length $n$
in $L$, is a rational function, and can be computed from an automaton recognising $L$. Hence the growth series of an automatic group is computable, given a geodesic automatic structure.

Reduction of an input word to the ``normal form'' defined by the language $L$
of the automatic structure for $G$ can be performed using a combination of
the word acceptor\index{word acceptor} and multiplier automata\index{multiplier automaton}, or alternatively using the word difference machine\index{word difference!machine}.

Finiteness of otherwise of an automatic group is immediately recognisable from
a word acceptor for an automatic structure; the language is infinite precisely
when the automaton admits loops.
In this way, the Heineken group
$G=\langle x,y,z \mid [x,[x,y]]=z,\,[y,[y,z]]=x,\,[z,[z,x]]=y \rangle$
was proved infinite, by Holt using \kbmag; computation with the automatic structure 
subsequently revealed the group to be hyperbolic. Previously that group had been proposed as a possible example of a finite group with a balanced presentation.
Similarly, a second proof of the infiniteness of the Fibonacci group $F(2,9)$ was provided by the construction of an automatic structure for it
\cite{HoltF29}.

Tests for hyperbolicity \cite{Wakefield,EpsteinHolt01} that make use of 
automatic structures for $G$ together with Papasoglu's characterisation of hyperbolic groups have already been described in Section~\ref{sec:hyperbolic}.
The second of those is implemented in \kbmag, as is an algorithm \cite{EpsteinHolt01} estimating the thinness constant (related to, but not equal to, the slimness constant) for geodesic triangles in the Cayley graph of a word hyperbolic group.

Quadratic and linear time solutions to the conjugacy problem\index{conjugacy problem} in a hyperbolic group are described in \cite{BridsonHowie} and \cite{EpsteinHolt06,BuckleyHolt}.
A practical cubic time solution that restricts to infinite order elements
is due to Marshall \cite{Marshall}, using some ideas from Swenson, and has been implemented in
the \gap system.

\section{Group actions and negative curvature}
\label{sec:actions}

One of the basic principles of
geometric group theory is generally referred to as the
the Milnor--\u{S}varc lemma:

If a group $G$ has a ``nice'' (properly discontinuous and cocompact) discrete, 
isometric action on a metric space $\cX$
then its Cayley graph is quasi-isometric to $\cX$.
In particular a group with such an action on a $\delta$-hyperbolic space is  word hyperbolic.

A variety of results derive automaticity or biautomaticity of
a group from its ``nice'' actions on spaces in which some kind of non-positive curvature can be found.

\begin{theorem}[Gersten, Short, 1990, 1991 \cite{GerstenShort90,GerstenShort91}]
\label{thm:GSaction}
A group acting discretely and fixed point freely on
a piecewise Euclidean 2-complex of type $A_1 \times A_1$, $A_2$, $B_2$ or $G_2$
(corresponding to tesselations of the Euclidean plane by squares, equilateral
triangles, or triangles with angles $(\pi/2,\pi/4,\pi/4)$ or $(\pi/2,\pi/3,\pi/6)$) is biautomatic.
\end{theorem}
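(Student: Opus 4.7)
The plan is to construct a biautomatic structure for $G$ directly from the geometry of the complex. Write $\widetilde{X}$ for the 2-complex on which $G$ acts; the hypothesis implies that $\widetilde{X}$ is tiled by Euclidean polygons whose angles are all of the form $\pi/m$ with $m \geq 2$, and it is standard (Gromov's link criterion) that such complexes are locally CAT(0) because the link of every vertex is a metric graph whose girth is at least $2\pi$. Assuming cocompactness of the action, which is implicit here, $\widetilde{X}$ is a CAT(0) geodesic space, and by Milnor--\u{S}varc, $\Cay(G,X)$ is quasi-isometric to $\widetilde{X}$ once a finite generating set $X$ is chosen (say, the labels of edges in a $G$-equivariant lift of a spanning tree, together with the remaining edge labels).

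Next I would define the combing. The Gersten--Short strategy is to choose, in each of the four tile types, a fixed pair of ``preferred directions'' that span each 2-cell. A canonical edge path from a basepoint $v_0$ to a vertex $v$ is then built by a greedy rule: at each vertex one moves, among outgoing edges consistent with the CAT(0) geodesic from $v_0$ to $v$, along the highest-priority preferred direction. This yields a unique word $\ell(v)$ for each $v$ in the 0-skeleton, and hence (after transferring via Milnor--\u{S}varc and the quasi-isometry) a language $L \subseteq (X^{\pm})^*$ containing one representative for each group element. The combed paths are $(\lambda,\epsilon)$-quasigeodesics by construction, since at each step they make bounded-angle progress toward the target in the flat metric.

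The key step is verifying the two-sided fellow-traveller axioms \textbf{A2} and \textbf{A3}. For adjacent targets $v$ and $vy$ (for $y \in X^{\pm} \cup \{1\}$), one compares the two canonical paths using the fact that in a CAT(0) space the function $t \mapsto d(\gamma_1(t), \gamma_2(t))$ between two geodesics is convex. Because each cell is flat Euclidean and the complex has only finitely many cell types modulo $G$, the deviation of the combed paths from the true CAT(0) geodesics is bounded by a constant depending only on the tile shape. Combining these two observations yields a uniform fellow-travel constant $k$ for both right and left multiplication by $y$; the left version gives A3 via the same argument applied to the translates of paths starting at $y$ rather than at $1$.

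Regularity of $L$, and of the multiplier relations, follows from the local nature of the greedy rule: the choice of the next edge in the canonical path from $v_0$ to $v$ depends only on the local combinatorial picture at the current vertex (link type, incoming edge, and the pattern of remaining preferred directions), of which there are only finitely many $G$-orbits. An \fsa whose states record this finite local data recognises $L$, and a word-difference automaton with differences of length at most $k$ verifies A2'. The principal obstacle, in my view, is the fellow-traveller bound: while CAT(0) convexity gives the \emph{right shape} of the argument, extracting a uniform $k$ requires a careful case analysis at the singular vertices of $\widetilde{X}$, where the canonical paths can ``hesitate'' between preferred directions, and this case analysis is genuinely different for each of the four tile types $A_1 \times A_1$, $A_2$, $B_2$, $G_2$.
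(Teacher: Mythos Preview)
The paper does not prove Theorem~\ref{thm:GSaction}; it is quoted as a result of Gersten and Short, with the proof residing entirely in \cite{GerstenShort90,GerstenShort91}. So there is no ``paper's own proof'' to compare against, and your sketch must be judged on its own merits and against what Gersten and Short actually do.

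Your outline contains a genuine gap at the fellow-traveller step. You invoke CAT(0) convexity of the distance function between geodesics, then say that the combed edge paths stay within a bounded distance of the CAT(0) geodesics ``depending only on the tile shape''. But the complexes in question contain isometrically embedded Euclidean flats (indeed, that is exactly what the four types $A_1\times A_1$, $A_2$, $B_2$, $G_2$ parametrise), and in CAT(0) spaces with flats the Morse lemma fails: $(\lambda,\epsilon)$-quasigeodesics need not stay within any uniform neighbourhood of the geodesic with the same endpoints. So the chain ``combed path $\approx$ CAT(0) geodesic, CAT(0) geodesics fellow-travel by convexity, hence combed paths fellow-travel'' breaks at the first link. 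A concrete instance: in the square tiling of $\R^2$, an edge path from $(0,0)$ to $(n,n)$ that first goes east then north is a $(\sqrt 2,0)$-quasigeodesic but strays distance $n/\sqrt 2$ from the diagonal geodesic. Your greedy rule may avoid this particular path, but you have not shown that it does, and general CAT(0) reasoning will not do it for you.

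This is precisely why Gersten and Short do \emph{not} argue via CAT(0) convexity. Their proofs are combinatorial and type-specific: they define normal-form edge paths via explicit local rewriting rules tailored to each of the four geometries, and establish the fellow-traveller property (and regularity, via what they call \emph{falsification by fellow traveller}) by a direct analysis of how two normal-form paths with nearby endpoints can differ inside a flat region. Your closing acknowledgement that ``a careful case analysis at the singular vertices \ldots\ is genuinely different for each of the four tile types'' is correct, but understates the situation: that case analysis \emph{is} the proof, not a detail to be filled in after a CAT(0) framework has done the heavy lifting.
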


As a consequence of the above results, and within the same two articles,
Gersten and Short deduce 
that groups satisfying any of the small cancellation\index{small cancellation} 
conditions C(7) or else T(p) and T(q) with $(p,q) \in \{(3,7),\,(4,5),\,(5,4)\}$
(defined in \cite{LyndonSchupp})
are hyperbolic, and hence in particular biautomatic,
and then that groups satisfying
the small cancellation conditions C(6), or C(4) and T(4), or C(3) and T(6)
are biautomatic.

A geodesic metric space $\cX$ is defined to be CAT(0)\index{CAT(0)!space} if
for any geodesic triangle in the space, and for any two points $p,q$ on the sides
of that triangle, the distance between $p$ and $q$ in $\cX$ is no more than
the distance between the points in corresponding positions on the
sides of a geodesic triangle with the same side lengths in the Euclidean plane, as illustrated in Figure~\ref{fig:cat0}.
A complete CAT(0) space is often called a {\em Hadamard space}.
A group is called CAT(0)\index{CAT(0)!group} if it acts properly and cocompactly on a CAT(0) space.

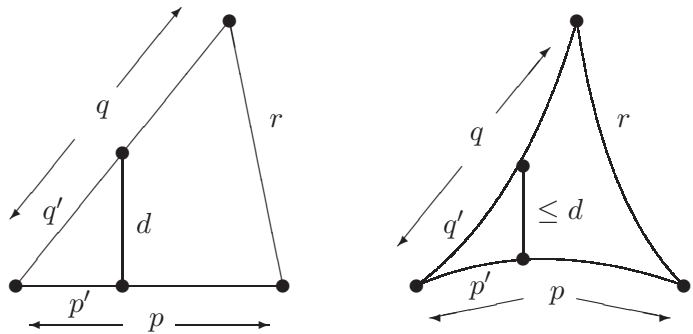
\begin{figure}[h]
	\begin{picture}(300,130)(-50,-20)
	\put(0,0){\circle*{5}}
	\put(100,0){\circle*{5}}
	\put(80,100){\circle*{5}}
	\put(0,0){\line(1,0){100}} \put(50,-15){$p$}
		\put(40,-15){\vector(-1,0){35}}
		\put(60,-15){\vector(1,0){35}}
	\put(0,0){\line(4,5){80}} \put(30,65){$q$}
		\put(22,55){\line(-4,-5){24}}\put(-2,25){\vector(-1,-1){0}}
		\put(38,75){\line(4,5){24}} \put(62,105){\vector(1,1){0}}
	\put(100,0){\line(-1,5){20}} \put(95,60){$r$}
	\put(40,0){\circle*{5}} \put(20,-10){$p'$}
	\put(40,50){\circle*{5}} \put(10,25){$q'$}
		\put(40,0){\line(0,1){50}} \put(45,20){$d$}
	\put(150,0){\circle*{5}}
	\put(250,0){\circle*{5}}
	\put(210,100){\circle*{5}}
        \qbezier(150,0)(195,20)(250,0) \put(200,-5){$p$}
		\put(190,-5){\line(-5,-1){35}} \put(155,-12){\vector(-1,0){0}}
		\put(210,-5){\line(5,-1){35}} \put(245,-12){\vector(1,0){0}}
        \qbezier(150,0)(190,30)(210,100) \put(170,55){$q$}
		\put(167,45){\line(-4,-5){24}}\put(143,15){\vector(-1,-1){0}}
		\put(180,65){\line(4,5){20}} \put(200,90){\vector(1,1){0}}
        \qbezier(250,0)(220,30)(210,100) \put(225,60){$r$}
	\put(190,10){\circle*{5}} \put(170,-3){$p'$}
	\put(190,45){\circle*{5}} \put(160,20){$q'$}
		\put(190,10){\line(0,1){35}} \put(195,25){$\leq d$}
	\end{picture}
	\caption{Comparable triangles in Euclidean and CAT(0) spaces}
	\label{fig:cat0}
\end{figure}

The CAT(-1)\index{CAT(-1)} condition is defined similarly with respect
to the hyperbolic plane; any CAT(-1) space is $\delta$-hyperbolic, for some $\delta$, and hence CAT(-1) groups are word hyperbolic.

A (not necessarily geodesic) metric space $(X,d)$ is said to have non-positive curvature\index{non-positive curvature} (or curvature $\leq 0$) if every point of $X$ is contained in a CAT(0)
neighbourhood.
By the Cartan-Hadamard theorem \cite{BridsonHaefliger} the universal cover of a complete connected space of non-positive curvature is CAT(0).

Niblo and Reeves studied in particular groups acting on CAT(0) cube complexes\index{CAT(0)!cube complex}:
\begin{theorem}[Niblo, Reeves 1998 \cite{NibloReeves98}]
\label{thm:NRaction}
A group acting faithfully, properly discontinuously and cocompactly on
a simply connected and non-positively curved cube complex is biautomatic.
\end{theorem}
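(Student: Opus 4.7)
My plan is to exploit the hyperplane structure of the CAT(0) cube complex $\cX$, following the approach pioneered by Sageev, to construct a canonical family of edge paths from a basepoint to every vertex of a $G$-orbit, and to take the words spelled by these paths as the biautomatic language. First I would invoke the Milnor--\v{S}varc lemma: the faithful, proper, cocompact action ensures that for any basepoint $v_0 \in \cX^{(0)}$ the map $g \mapsto gv_0$ is a quasi-isometry, and vertex stabilisers are finite. By passing to an appropriate finite generating set $X$ for $G$, I can arrange that the elements of $X^\pm$ correspond to oriented edge paths in $\cX^{(1)}$ of uniformly bounded length emanating from $v_0$, so that edge paths in $\cX^{(1)}$ starting at $v_0$ spell words over $X^\pm$ that track $G$-orbit points.

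The central construction is that of \emph{normal cube paths}. Recall that in a CAT(0) cube complex each edge lies in a unique hyperplane, and two vertices $v,w$ are separated by a finite set $\cH(v,w)$ of hyperplanes whose cardinality equals the combinatorial distance in $\cX^{(1)}$ from $v$ to $w$. A normal cube path from $v$ to $w$ is a sequence $(C_1,\ldots,C_n)$ of cubes in $\cX$, each with two distinguished antipodal vertices $x_i$ (entry) and $y_i$ (exit), satisfying $x_1=v$, $y_n=w$, $y_i=x_{i+1}$, and the normality requirement that $C_{i+1}$ is the maximal cube at $x_{i+1}$ spanned by edges whose hyperplanes lie in $\cH(x_{i+1},w)$ and are transverse to every hyperplane of $C_i$. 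A key structural fact is that such a path exists, is unique, and its total edge length realises the combinatorial distance from $v$ to $w$, so normal cube paths are geodesic in $\cX^{(1)}$.

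I would take $L$ to be the set of words spelled by normal cube paths from $v_0$ to $gv_0$ as $g$ ranges over $G$, traversed with a chosen linear order on the edges of each cube, and verify the three defining conditions in turn. For the fellow traveller conditions A2 and A3 I would exploit convexity of the combinatorial metric and median-graph structure of $\cX^{(1)}$: if $g$ and $gs$ differ by a generator $s \in X^\pm$, the hyperplane sets $\cH(v_0,gv_0)$ and $\cH(v_0,gsv_0)$ agree up to the bounded contribution of the single generator, and at corresponding points along the two normal cube paths the partial vertices lie a uniformly bounded distance apart in $\cX^{(1)}$ (bounded by the diameter of the largest cube meeting a fundamental domain). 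Composing with the Milnor--\v{S}varc quasi-isometry delivers a synchronous fellow traveller constant $k$ in $\Cay(G,X)$. The analogous argument applied from the base $v_0$ versus a translate $sv_0$, using $G$-equivariance of the whole construction, handles the left multiplication condition A3, yielding biautomaticity.

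The main obstacle is the regularity assertion A1. Although the fellow traveller conditions follow rather cleanly from cubical convexity once normal cube paths are in hand, the definition of normality is phrased globally via the target $w$ through the hyperplane set $\cH(x_{i+1},w)$, which is not information accessible to a finite state machine. The heart of the argument is to show that the transition from $C_i$ to $C_{i+1}$ can in fact be detected locally: cocompactness forces only finitely many isomorphism types of pointed configurations (a cube together with a choice of exit vertex and a bounded neighbourhood of hyperplane types in its link) to appear in $\cX$, and the normality condition can be reformulated as a condition on which link-edges at $x_{i+1}$ are permitted to open the next cube, using the fact that a hyperplane separates $x_{i+1}$ from $w$ if and only if it does so from every future vertex of a normal continuation. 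The automaton then has states indexed by these finitely many local configurations, with transitions dictated by the permitted link-edges. This finitary reformulation is where the specifically cubical hypothesis, as opposed to general non-positive curvature, is essential; in more general CAT(0) spaces no corresponding finite-state normal form is known.
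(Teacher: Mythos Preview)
The paper does not supply a proof of this theorem: it is stated as a cited result of Niblo and Reeves, followed only by the definition of a cube complex and Gromov's link criterion for non-positive curvature, and then the narrative moves on to applications. There is therefore nothing in the paper against which to compare your argument line by line.

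That said, your outline is essentially the strategy of the original Niblo--Reeves paper: build normal cube paths in the CAT(0) cube complex, use hyperplane combinatorics and cubical convexity to obtain two-sided fellow travelling, and extract regularity from cocompactness by showing that the greedy ``take the maximal cube'' rule is governed by finitely many local types. Two points deserve tightening. First, your description of normality is slightly off: the defining condition on $C_{i+1}$ is simply that it is the maximal cube at $x_{i+1}$ whose hyperplanes all separate $x_{i+1}$ from $w$; the clause ``transverse to every hyperplane of $C_i$'' is neither part of the definition nor needed, and as phrased it is not the right condition. Second, because the action need not be free, the language $L$ is naturally a set of words over the alphabet of $G$-orbits of oriented edges (or cube diagonals), not directly over $X^\pm$; one then transports the structure to $G$ via the quasi-isometry and the finiteness of point stabilisers, and this transfer step should be made explicit rather than folded into the Milnor--\v{S}varc sentence. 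With those adjustments your sketch matches the source argument the survey is citing.
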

A cube complex\index{cube complex} is defined to be a metric polyhedral complex in which each cell is isometric
to the Euclidean cube with side lengths 1, where the gluing maps are isometries. 
Such a complex 
is non-positively curved provided that it contains 
at most one edge joining any two vertices, and no triangles of edges,
and (by a result of Gromov \cite{Gromov})  is CAT(0) if non-positively curved and simply connected.

Actions of Coxeter groups\index{Coxeter group}
on CAT(0) cube complexes are constructed in \cite{NibloReeves03}, but are not 
necessarily cocompact. However in some cases it follows from those or related 
constructions that the Coxeter groups are biautomatic (see Section~\ref{sec:coxeter}).

There are many open problems relating to CAT(0) groups\index{CAT(0)!group}
(see for example \cite{FarbHruskaThomas}).
The question of whether every CAT(0) group must be biautomatic
was recently resolved in the negative by Leary and Minasyan \cite{LearyMinasyan}, who constructed an example of a 3-dimensional CAT(0) group which could admit no biautomatic subgroup of finite index.  
It is still unknown whether non-automatic CAT(0) groups can exist.

However a restricted class of CAT(0) groups is provided by groups that act 
geometrically on CAT(0) spaces with isolated flats.
A $k$-flat in a CAT(0) space is an isometrically embedded copy of Euclidean space $\R^k$.
This family
contains a number of interesting examples, including geometrically finite Kleinian groups,  the fundamental groups of various compact manifolds, and limit groups,
arising from the solutions of equations over free groups. Groups of this type
are studied in \cite{HruskaKleiner}, where more details (of definition and examples) can be found.
Theorem 1.2.2 of that article establishes a number of properties of such 
groups, including their biautomaticity.

A form of non-positive curvature in simplicial complexes is defined in \cite{JS06}:
a flag simplicial complex $\cX$ is called $k$-systolic\index{systolic!complex} if
connected, simply connected and locally $k$-large (no minimal $\ell$-cycle
with $3<\ell<k$ in the link of a vertex).
A group is called $k$-systolic\index{systolic!group} if it acts simplicially, 
properly discontinuously and cocompactly on a $k$-systolic simplical complex, and is called systolic if 6-systolic.
\begin{theorem}[Januszkiewicz, Swiatkowski, 2006 \cite{JS06}]
\label{thm:JSaction}
7-systolic groups are hyperbolic, 6-systolic groups are biautomatic.
\end{theorem}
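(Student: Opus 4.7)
The plan is to derive the required global geometric properties from the local link conditions on the complex $\cX$ and then transfer them to the group $G$ via its geometric action. First I would establish a combinatorial convexity lemma: in any $k$-systolic complex $\cX$ with chosen basepoint $v_0$, each combinatorial ball $B_n(v_0)$ is a full subcomplex, and for every vertex $w \in B_n(v_0)\setminus B_{n-1}(v_0)$ the ``downward projection'' $\pi(w) := \{u \in B_{n-1}(v_0) : u \text{ adjacent to } w\}$ spans a simplex. The crucial property, provable because 6-systolicity forbids empty 4- and 5-cycles in links, is that whenever $w,w'$ are adjacent vertices of the same sphere, $\pi(w)\cup \pi(w')$ still spans a simplex. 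This controls how projections vary along paths in spheres, and it is the combinatorial replacement for convexity of balls in a CAT(0) space.

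For hyperbolicity of 7-systolic groups I would use a combinatorial Gauss--Bonnet argument. In a 7-systolic complex, the link of every interior vertex of a minimal disc diagram has girth at least $7$, so that vertex is surrounded by at least $7$ triangles contributing combinatorial angle at least $7\pi/3 > 2\pi$. Summing the resulting (uniformly negative) angle defects yields a linear combinatorial isoperimetric inequality for $\cX^{(1)}$, from which Gromov hyperbolicity of $\cX^{(1)}$ follows. Since $G$ acts properly and cocompactly on $\cX$, the Milnor--\u{S}varc lemma quoted at the start of this section gives a quasi-isometry from $G$ to $\cX^{(1)}$, so $G$ is word hyperbolic and hence (bi)automatic by the results of Section~\ref{sec:hyperbolic}.

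For biautomaticity of 6-systolic groups I would build an explicit language of normal forms. Fix $v_0 \in \cX$ and a total order on its vertices. For each vertex $v$, define the canonical \emph{directed geodesic} from $v_0$ to $v$ by iterating the downward projection, choosing at each step the smallest vertex of the projection simplex. Translating this combinatorial geodesic into the Cayley graph of $G$ using a fixed system of orbit representatives produces a word $w_v$ over a finite generating set $X$ representing the group element mapping $v_0$ to the $G$-image of $v$. Because the combinatorial type of the local decision at each step is determined by a bounded neighbourhood of $\cX$, and because cocompactness leaves only finitely many such neighbourhoods up to the $G$-action, the set $\{w_v\}$ is a regular language whose word acceptor can be built from this finite local data.

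The hardest step will be verifying the two-sided fellow-traveller condition. For condition A2, if $v,v'$ are at distance one in $\cX$, the ball-convexity lemma implies that the iterated projections of $v$ and $v'$ into each $B_i(v_0)$ share a common simplex, so the $i$-th vertices of $w_v$ and $w_{v'}$ lie within a uniformly bounded distance in $\cX^{(1)}$; quasi-isometric to the Cayley graph, this gives the required constant $k$. Condition A3 is more delicate: the canonical rule is anchored at $v_0$ and hence is not strictly $G$-equivariant, so one must control the bounded discrepancy when the basepoint is moved to $gv_0$. The same convexity lemma provides the uniform bound, but this bookkeeping is the main technical obstacle, and it is precisely here that the 6-systolic hypothesis (rather than mere simple connectedness and local $k$-largeness for some smaller $k$) is indispensable, as the absence of short 4- and 5-cycles in links is what forces the two competing systems of projection simplices to intersect.
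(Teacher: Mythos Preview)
The paper does not prove this theorem; it is simply quoted from \cite{JS06} as background for the applications in Section~\ref{sec:artin}. So there is no ``paper's proof'' to compare against, only the original Januszkiewicz--\'{S}wi\c{a}tkowski argument. Your outline is broadly in that spirit: the ball-convexity/projection lemma and the use of directed geodesics descending through spheres are exactly the engine of \cite{JS06}, and your Gauss--Bonnet route to a linear isoperimetric inequality in the $7$-systolic case is the standard one.

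There is, however, a genuine gap in the biautomaticity half. You fix a global total order on the vertices of $\cX$ and, at each step, pick the \emph{smallest} vertex of the projection simplex. You then assert that the resulting language is regular because ``the combinatorial type of the local decision at each step is determined by a bounded neighbourhood of $\cX$'' and cocompactness leaves only finitely many such neighbourhoods up to the $G$-action. But the local decision is \emph{not} determined by the isomorphism type of a bounded neighbourhood: it depends on which of several vertices is smallest in your global order, and that order cannot be $G$-equivariant. Two vertices in the same $G$-orbit, with identical local pictures, can make different choices. So the finiteness-of-local-types argument for regularity collapses, and with it the construction of the word acceptor. The same non-equivariance is what makes your A3 discussion hand-wavy: you cannot appeal to the convexity lemma to bound the basepoint discrepancy once the selection rule itself carries unbounded global information.

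The fix, and what \cite{JS06} actually does, is to avoid an arbitrary tie-breaking order altogether. One takes as language (essentially) \emph{all} directed geodesics---sequences of simplices produced by the full projection, not a single chosen vertex---and shows that this family is recognised by a finite automaton built from the finitely many $G$-types of bounded balls. Because the definition of ``directed'' is $G$-equivariant, both A2 and A3 then follow uniformly from the projection lemma you stated. If you rewrite your normal form in those terms (simplex-valued descending paths rather than a lexicographic vertex choice), the rest of your sketch goes through.
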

This result is used to prove biautomaticity of a large class of Artin groups\index{Artin group} \cite{HuangOsajda20}, as detailed in Section~\ref{sec:artin}.

A {\em Helly graph}\index{Helly!graph} is a graph in which every family of pairwise intersecting 
balls has a non-empty intersection. A group is called Helly\index{Helly!group} if it acts properly and cocompactly by graph automorphisms on a Helly graph; word hyperbolic groups\index{hyperbolic group}, CAT(0) cubical groups\index{CAT(0)!group} and C(4)-T(4) small cancellation\index{small cancellation} groups are all examples. It is proved in \cite{CCG+20} that all Helly groups are biautomatic. This result is used to 
 prove biautomaticity of another large class of Artin groups\index{Artin group} \cite{HuangOsajda21}, as detailed in Section~\ref{sec:artin}.

\section{Some automatic and biautomatic families}
\label{sec:eg_families}
Over a period of more than 30 years, automatic and biautomatic structures were found
for various families of groups, including braid groups, 
many Artin groups\index{Artin group}, mapping class groups\index{mapping class group}, and Coxeter groups\index{Coxeter group}. But some questions remain open for these families.

\subsection{Braid groups, Artin groups and Mapping Class groups}
\label{sec:artin}

Automatic structures for the braid group $B_n$ on $n$ strands and also for the (closely related) mapping class group\index{mapping class group} of the $(n+1)$-punctured sphere
were constructed by Thurston and are  described in \cite{ECHLPT}; 
one of the structures described for the braid groups is symmetric, proving 
the braid groups to be biautomatic. The automaticity (but not necessarily biautomaticity) of the mapping class group
of the $n+1$-punctured sphere then follows from the fact that it contains the quotient of the braid group $B_n$ by its centre as a subgroup of index $n+1$.

The braid group on $n+1$ strands is isomorphic to the Artin group of finite 
type $A_n$.
We recall that an Artin group\index{Artin group}
is a group defined by a presentation of the form
\[
\langle x_1,x_2,\cdots,x_n \mid \overbrace{x_ix_jx_i\cdots}^{\mij}= \overbrace{x_jx_ix_j \cdots}^{\mij},\, i\neq j \in \{1,2,\ldots,n\}\rangle,\]
relating to a symmetric, integer {\em Coxeter matrix} $(\mij)$, or equivalently 
a {\em Coxeter diagram}\index{Coxeter diagram} $\Gamma$ on $n$ vertices, whose edge $\{i,j\}$ is labelled
$\mij$, and is naturally associated with a
Coxeter group\index{Coxeter group} by adding relations $x_i^2=1$ for each $i$.
The Artin group has finite type if the associated Coxeter group is finite
(and hence $\Gamma$ is a disjoint union of diagrams from the well--known
list of spherical Coxeter diagrams).

In \cite{Charney92},
Charney used results of Deligne to extend 
Thurston's construction for the braid groups to all finite type Artin groups.
Charney's construction provided biautomatic structures for all
finite type Artin groups;
these biautomatic structures were geodesic over the ``Garside'' generating sets, 
but not over the standard generators $x_i$.
Biautomatic structures for all Garside groups
(of which finite type Artin groups are examples) were described by Dehornoy \cite{Dehornoy02}. 

For Artin groups of FC type (free products of finite type groups with
amalgamation over parabolic subgroups, for which the complete subgraphs of the
labelled graph formed by deleting all $\infty$-labelled edges from $\Gamma$ are all of finite type), {\em asynchronously automatic} structures
were constructed in \cite{Altobelli}, and used to define quadratic time solutions to the word problem\index{word problem}; we recall that an exponential (rather then quadratic) time
solution is guaranteed by asynchronous automaticity.
Right-angled Artin groups (those for which all the parameters $\mij$ are
within the set $\{2,\infty\}$, which form a subset of FC type) were then proved automatic in
\cite{HermillerMeier,VanWyk}. Very recently \cite{HuangOsajda21} Artin groups 
of FC type have been proved to be Helly\index{Helly!group}, and hence biautomatic.

Mosher's paper \cite{Mosher95} answered a major open question raised by
Thurston's proof of the automaticity of the mapping class group\index{mapping class group} of the punctured
sphere. Using quite different techniques from Thurston, Mosher proved automaticity of the mapping class group of any surface of finite type, that is, the group of (orientation preserving) homeomorphisms modulo isotopy of any surface obtained from a compact surface by removing at most finitely many points.
In the case of a surface with at least one puncture the automatic structure is explicitly 
defined (and could be constructed),
in terms of a complex
whose vertices are {\em ideal triangulations} on S (triangulations with vertex set the puncture set) and whose edges are elementary moves between ideal triangulations. 
The more general case can be reduced to the case of a punctured surface using a short exact sequence.
The question of whether
the mapping class group\index{mapping class group} was in fact biautomatic was finally solved by Hamenstaedt's construction of a biautomatic structure in 2009 \cite{Hamenstaedt}.

An Artin group is defined to have {\em large type} if all the associated 
parameters $\mij$ are at least 3, {\em extra large type}
if all $\mij$ are at least 4. For large and especially extra large groups 
small cancellation techniques associated with negatively curved geometry
were developed in \cite{AppelSchupp}. 
All extra large Artin groups
were proved biautomatic in \cite{Peifer},
using those small cancellation techniques; the language is a set of geodesics over the standard generating set.
All those groups and many others of large type were
found by 
Brady and McCammond \cite{BradyMcCammond} to act appropriately on piecewise 
Euclidean non-positively curved 2-complexes of types $A_2$ or $B_2$,
and hence, by results of
\cite{GerstenShort90,GerstenShort91} to be biautomatic (but in this case
the biautomatic structure is defined over a non-standard generating set).

All Artin groups of large type
were proved to be shortlex automatic
over their standard generating sets in \cite{HoltReesSLArtin}.
A rewrite system was described, which rewrote any word to shortlex geodesic
form using sequences of moves on 2-generator substrings. The result extended 
beyond large type to {\em sufficiently large} type, where some 
parameters $\mij$ might take the value $2$ (provided that for any triple $i,j,k$,
if $\mij=2$, then either $\mik=\mjk=2$ or at least one of $\mik$ and $\mjk$ is infinite). Biautomatic structures for
all large type Artin groups (and in fact for the slightly large class of {\em almost large} groups)
were proved to exist in \cite{HuangOsajda20}, where all those
groups were proved to have appropriate actions on {\em systolic}
complexes\index{systolic!complex}.
An Artin group is called almost large if for any triple $i,j,k$ it is only possible to have $\mij=2$ if one of $\mik$ or $\mjk$ is infinite, and for any 4-set $i,j,k,l$
at most 2 of $\mij$, $\mjk$, $\mkl$, $\mil$ can be equal to 2 unless one of the four parameters is infinite.

\subsection{Coxeter groups}
\label{sec:coxeter}

The proof in \cite{BrinkHowlett} of shortlex automaticity of any Coxeter group\index{Coxeter group} relative to its standard generating set provided a result that had long been conjectured. We recall that a Coxeter group $W$ is
described by a presentation
\[  \langle x_1,\ldots,x_n \mid x_i^2 = 1,\,(x_ix_j)^{\mij} = 1,\,i\neq j \in \{1,\ldots,n\} \rangle, \]
relative to a Coxeter matrix $(\mij)$ and associated Coxeter diagram\index{Coxeter diagram} $\Gamma$;
the set 
$X=\{x_1,\ldots, x_n\}$ is its {\em standard generating set}.

The proof of the theorem constructs an automatic structure for $W$ using properties of its
associated {\em root system}, which arises from the natural isomorphism 
between $W$ and a reflection group $\overline{W}$ as we now describe;
more details can be found in \cite{Humphreys}.
The group $\overline{W}$ is generated by  a set of {\em reflections} $r_1,\ldots,r_n$ of $\R^n$ defined by
$r_i(v) := v - 2 \langle v, e_i \rangle e_i$, for $v \in \R^n$,
where $e_i: i=1,\ldots,n$ is a basis for $\R^n$ and $\langle,\rangle$ is the 
symmetric, bilinear form on $\R^n$ defined by $\langle e_i,e_j\rangle = - \cos(\pi/\mij)$. 
The isomorphism from $W$ to $\overline{W}$ maps $x_i$ to $r_i$, and induces an action of $W$ on $\R^n$. 
The roots of $W$ are defined to be the elements of the set $\Phi=W\{e_1,\ldots,e_n\}$,
which decomposes as a disjoint union $\Phi^+ \cup \Phi^-$ of {\em positive roots} (vectors $\sum \lambda_ie_i$ with all $\lambda_i \geq 0$) and their negatives.

Brink and Howlett's proof of regularity of the set of shortlex geodesic words in $W$ is derived from their proof in \cite{BrinkHowlett} of the 
finiteness of the set of positive roots for $W$ that {\em dominate} any given positive root;
a positive root $\alpha$ is said to dominate a second positive root $\beta$
if whenever $w(\alpha)$ is negative, for $w \in W$, then so is $w(\beta)$.
We define $\widetilde{\Delta}_W$ to be the set
of positive roots that dominate no others.
Then a word acceptor\index{word acceptor} $\WA$ for a shortlex automatic structure for $W$ 
can be built whose accepting states are all subsets of 
$\widetilde{\Delta}_W$ \cite[Proposition 3.3]{BrinkHowlett}.

The transitions in  $\WA$ are determined by the following observation from
\cite[Lemma 3.1]{BrinkHowlett}. When 
$w=x_{i_1}\cdots x_{i_l}$ is a shortlex geodesic word representing an element of $W$
then, for $x_i \in X$, the word $w'=wx_i$ is non-geodesic if and only if there exists
$j \in \{1,\ldots,l\}$ for which $e_i = x_{i_l}\cdots x_{i_{j+1}}(e_{i_j})$.  
In the case where $w'=wx_i$ is geodesic, that fails to be shortlex minimal 
if and only if there exists 
$j \in \{1,\ldots,l\}$ and a generator $x_k \prec x_{i_j}$ for which $e_i = x_{i_l}\cdots x_{i_j}(e_k)$.  
In that case the word $x_{i_1}\cdots x_{i_{j-1}}x_kx_{i_j}\cdots x_{i_l}$ is shortlex minimal.
Based on these two facts, 
transition on a generator $x_i$ from (the state corresponding to) a subset $S$ of $\widetilde{\Delta}_W$ is to a failure state $F$ if $e_i \in S$. 
But for $e_i \not\in S$, transition is to the intersection with 
$\widetilde{\Delta}_W$ of the set
\[ S'' = \{ x_i(\alpha) \mid \alpha \in S\} \cup \{e_i\} \cup \{ x_i(e_k) \mid x_k \prec x_i \}. \] 

A similar construction to the above, described in \cite{Howlett}, proves regularity of the set of all geodesic words in $G$
over $S$.

The question of whether all Coxeter groups are not just automatic but actually biautomatic remains open.
Work of Niblo and Reeves \cite{NibloReeves03} shows that any finitely generated
Coxeter group $G$ acts properly discontinuously by isometries on a locally finite, 
finite dimensional CAT(0) cube complex\index{CAT(0)!cube complex};  their construction is based on the root system $\Phi$ associated with $G$, and an extension of the 
dominance relation of \cite{BrinkHowlett} from $\Phi^+$ to $\Phi$.
When the action of $G$ on the cube complex is cocompact, 
then biautomaticity follows, using \cite{NibloReeves98}.
Cocompact actions are proved in \cite{NibloReeves03} to exist 
whenever
$G$ is right-angled or word hyperbolic
(by \cite{Moussong} word hyperbolicity of $G$ is recognisable from the diagram $\Gamma$). 
It is also observed in \cite{NibloReeves03} that,
by \cite{Wil99}, cocompact actions are guaranteed whenever $G$ contains only finitely many conjugacy classes of
subgroups isomorphic to rank 3 parabolic subgroups  
$\langle x_i,x_j,x_k \rangle$ 
(associated with rank 3 subdiagrams $\Gamma_{ijk}$ of $\Gamma$)
for which $\mij,\mik,\mjk$ are all finite; 
\cite{CapraceMuhlherr} used this result to derive biautomaticity of $G$ provided that $\Gamma$ contains no 
affine subdiagram of rank 3 or more. Subsequently, Caprace \cite{Caprace} 
proved biautomaticity of all relatively hyperbolic\index{hyperbolic group!relatively} Coxeter groups using results from \cite{HruskaKleiner}.

The dimension of a Coxeter group is defined to be the dimension of its {\em Davis complex},
equivalently the maximal rank of any of its spherical parabolic subgroups.
It follows that
a Coxeter group is 2-dimensional\index{Coxeter group!2-dimensional}
if none of the rank 3 subdiagrams $\Gamma_{ijk}$ is spherical, equivalently if for all $i,j,k$, 
$\frac{1}{\mij}+\frac{1}{\mik}+\frac{1}{\mjk} \leq 1$. The biautomaticity 
of all 2-dimensional Coxeter groups is proved in \cite{MunroEtAl}. The construction of a geodesic language generalises ideas from \cite{NibloReeves03},
and the result generalises an earlier result proving biautomaticity for certain 2-dimensional groups that used the results of \cite{NibloReeves03}.
 
\section{Open problems}
\label{sec:open_problems}

More than 30 years after the subject started there continue to be many open problems involving automatic groups. 
Some of these problems date from the beginning of the subject, and are listed 
in \cite{ECHLPT}. Some but not all of these have been mentioned within this chapter.
In particular, it remains open whether automatic groups exist that are not biautomatic (see Section~\ref{sec:automatic})
also whether automatic groups exist that do not have soluble conjugacy problem\index{conjugacy problem} (see Section~\ref{sec:properties})
,
whether all soluble automatic groups must be virtually abelian.
The most recent progress on this last question was made by 
the proof of Romankov \cite{Romankov}, 
that a soluble biautomatic group must be virtually abelian (see Section~\ref{sec:soluble}).

It is still unknown whether a non-biautomatic Coxeter group can exist (Section~\ref{sec:coxeter}),
or a non-automatic Artin group (Section~\ref{sec:artin}).

There are many open problems relating to group actions, in particular, whether  a CAT(0) group\index{CAT(0)!group} must be
automatic. The very recent construction in \cite{LearyMinasyan} of a 3-dimensional CAT(0) group that cannot be biautomatic (Section~\ref{sec:actions}) represents a major advance on this problem; it does not resolve the question of
automaticity. The question of whether biautomaticity or automaticity are implied
for a 2-dimensional, piecewise Euclidean CAT(0) group remains open (but we note the recent 
contribution to this problem of the main result of \cite{MunroEtAl}, see 
Section~\ref{sec:coxeter}). The 2-dimensional problem is number 43 on a list of open 
problems within geometric group theory that was published ten years ago 
in \cite{FarbHruskaThomas}, and motivated a body of research, and rapid 
solution of some of the problems. However, some of 
the problems listed in this useful and extensive list, or in the earlier list \cite{Bestvina}, remain open.


\begin{thebibliography}{99}
\bibitem{Aho} A.V. Aho, Indexed grammars -- an extension of context-free 
grammars, Assoc. Comput. Mach.  15 (1968) 647--671.
\bibitem{AlonsoEtAl}
J. Alonso, T. Brady, D. Cooper, V. Ferlini, M. Lustig, M. Mihalik,
M. Shapiro and H. Short, {\em Notes on word-hyperbolic groups},
Proc. Conf.  Group Theory from a Geometrical Viewpoint,
eds. E. Ghys, A. Haefliger and A. Verjovsky,
held in I.C.T.P., Trieste, March 1990,
World Scientific, Singapore, 1991.
\bibitem{AlonsoBridson} J. Alonso, M.R. Bridson, Semihyperbolic groups,
Proc. London Math. Soc. (3) 70 (1995), no. 1, 56–114. 
\bibitem{Altobelli} J.A. Altobelli, The word problem for Artin groups of FC type. 
J. Pure Appl. Algebra 129 (1998), no. 1, 1–22. 
\bibitem{AntolinCiobanu} Y.  Antolin, L. Ciobanu, 
	Finite generating sets of relatively hyperbolic groups and applications to geodesic languages,
		International Journal of Algebra and Computation 11 (2001)  No. 04, 467--487.
\bibitem{AppelSchupp} K.I. Appel, P.E. Schupp, 
Artin groups and infinite Coxeter groups. Invent. Math. 72 (1983), no. 2, 201--220.
\bibitem{BGSS} G. Baumslag, S.M. Gersten, M. Shapiro, H.B. Short, 
Automatic groups and amalgams. J. Pure Appl. Algebra 76 (1991), no. 3, 229--316.
\bibitem{Bestvina} M. Bestvina, Questions in geometric group theory,\\
	{\tt http://www.math.utah.edu/\~bestvina/eprints/questions-updated.pdf}
\bibitem{Bowditch} B.H. Bowditch, Relatively hyperbolic groups, Internat. J. Algebra Comput. 22 (2012) no. 3, 1250016, 66pp. 
\bibitem{Brady93} N. Brady, 
Sol geometry groups are not asynchronously automatic. Proc. London Math. Soc. (3) 83 (2001) no. 1, 93--119.
\bibitem{BradyMcCammond} T. Brady, J. McCammond, Three-generator Artin groups of large type are biautomatic. J. Pure Appl. Algebra 151 (2000), no. 1, 1--9.
\bibitem{Bridson03} M.R. Bridson, Combings of groups and the grammar of reparameterization. Comment. Math. Helv. 78 (2003), no. 4, 752--771.
\bibitem{BridsonGilman}
M.R. Bridson, R.H. Gilman, Formal language theory and the geometry of 3-manifolds. Comment. Math. Helv. 71 (1996), no. 4, 525--555.
\bibitem{BridsonHaefliger} M.R. Bridson, A. Haefliger, Metric spaces of non-positive curvature,
Grundlehren der mathematischen Wissenschaften [Fundamental Principles of Mathematical Sciences], 319. Springer-Verlag, Berlin, 1999. xxii+643pp.
\bibitem{BridsonHowie} M.R. Bridson, J. Howie, Conjugacy of finite subsets in hyperbolic groups,
Internat. J. Algebra Comput. 15 (2005), no. 4, 725--756. 
\bibitem{BrinkHowlett} B. Brink, R.B. Howlett, A finiteness property and an automatic structure for Coxeter groups. Math. Ann. 296 (1993), no. 1, 179--190.
\bibitem{BuckleyHolt} D.J. Buckley, D.F. Holt, The conjugacy problem in hyperbolic groups for finite lists of group elements. Internat. J. Algebra Comput. 23 (2013), no. 5, 1127–-1150.
\bibitem{Cannon84} J.W. Cannon, The combinatorial structure of cocompact 
discrete hyperbolic groups, Geom. Dedicata 16 (1984) 123--148.
\bibitem{Cannon21} J.W. Cannon, Max Dehn and the word problem, Max Dehn: Polymathis mathematician. Edited by J. Lorenat, J. Mccleary, D. Rowe, M. Senechal, ed. AMS, to appear.
\bibitem{Caprace} P.-E. Caprace, Buildings with isolated subspaces and relatively hyperbolic Coxeter groups, Innov. Incidence Geom. 10 (2009) 15--31.
\bibitem{CapraceMuhlherr} P.-E. Caprace, B. M\"uhlherr,Reflection triangles in Coxeter groups and biautomaticity, J Group Theory 8 (2005), no 4, 467--489. 
\bibitem{CCG+20} J. Chalopin, V. Chepo, A. Genevois, H. Hirai, D. Osajda, Helly groups, {\tt arXiv:2002.06895}
\bibitem{Charney92} R. Charney, Artin groups of finite type are biautomatic. Math. Ann. 292 (1992), no. 4, 671--683.
\bibitem{DahmaniGuirardel} F. Dahmani, V. Guirardel, The isomorphism problem for all hyperbolic groups,
Geom. Funct. Anal. 21 (2011), no. 2, 223--300. 
\bibitem{Dehornoy02} P. Dehornoy, Groupes de Garside, Ann. Sci. Ecole 
Norm. Sup. (4) 35 (2002), no. 2, 267--306.
\bibitem{ECHLPT} D.B.A. Epstein, J.W. Cannon, D.F. Holt, S. Levy,
M.S. Patterson, W.P. Thurston, {\em Word processing in groups}, Jones
and Bartlett, 1992.
\bibitem{EpsteinHolt01} D.B.A. Epstein, D.F. Holt, Computation in word hyperbolic groups, International Journal of Algebra and Computation 11 (2001) No. 04, 467--487.
\bibitem{EpsteinHolt06} D.B.A. Epstein, D.F. Holt, The linearity of the conjugacy problem in word-hyperbolic groups,
Internat. J. Algebra Comput. 16 (2006), no. 2, 287–-305. 
\bibitem{EHR} D.B.A. Epstein, D.F. Holt, S.E. Rees,
The use of Knuth--Bendix methods to solve the word problem in automatic
groups, J. of Symbolic Computation, 12 (1991), 397--414.
\bibitem{Epstein20} D.B.A. Epstein, private communication.
\bibitem{Farb92} B. Farb, Automatic groups: a guided tour. Enseign. Math. (2) 38 (1992), no. 3-4, 291–-313.
\bibitem{FarbHruskaThomas} B. Farb, C. Hruska, A. Thomas, Problems on automorphism groups of nonpositively curved polyhedral complexes and their lattices. Geometry, rigidity, and group actions, 515–-560, Chicago Lectures in Math., Univ. Chicago Press, Chicago, IL, 2011.
\bibitem{gap}
The GAP~Group.
{\em {GAP --- Groups, Algorithms, and Programming, Version 4.8.10}},
  2018.
\verb+(http://www.gap-system.org)+.
\bibitem{GerstenShort90} S.M. Gersten, H.B. Short, Small cancellation theory and automatic groups, Invent. Math. 102 (1990), no. 2, 305--334.
\bibitem{GerstenShort91} S.M. Gersten, H.B. Short, Small cancellation theory and automatic groups. II,
Invent. Math. 105 (1991), no. 3, 641--662.
\bibitem{Gilman} R.H. Gilman, Groups with a rational cross-section, Combinatorial group theory and topology (Alta, Utah, 1984), 175--183,
Ann. of Math. Stud., 111, Princeton Univ. Press, Princeton, NJ, 1987. 
\bibitem{Gromov} M. Gromov, Hyperbolic groups, Essays in group theory, 75--263, Math. Sci. Res. Inst. Publ., 8, Springer, New York, 1987.
\bibitem{Hamenstaedt} U. Hamenstaedt, Geometry of the mapping class group II: A biautomatic structure, {\tt arXiv:0912.0137}
\bibitem{Harkins} A. Harkins, Combing lattices of soluble Lie groups, PhD thesis, University of Newcastle, 2001.
\bibitem{HermillerMeier} S. Hermiller and J. Meier, Algorithms and geometry
for graph products of groups, J. Alg. 171 (1995), 230--257.
\bibitem{HoltF29} D.F. Holt, An alternative proof that the Fibonacci group F(2,9) is infinite. Experiment. Math. 4 (1995), no. 2, 97--100.
\bibitem{HoltKbmag}  D.F. Holt, The Warwick automatic groups software, Geometric 
and computational perspectives on infinite groups (Minneapolis, MN and New 
Brunswick, NJ, 1994), 69–82, DIMACS Ser. Discrete Math. Theoret. Comput. 
Sci., 25, Amer. Math. Soc., Providence, RI, 1996. 
\bibitem{Holt21} D.F. Holt, provate communication.
\bibitem{HoltReesSLArtin} D.F. Holt, S. Rees, Shortlex automaticity and geodesic
regularity in Artin groups, Groups, Complex. Cryptol. 5 (2013) 1--23.
\bibitem{HRR} D.F. Holt, S. Rees,C.E. R\"over, Groups, Languages and Automata, LMS Student Texts 88, Cambridge University Press. London 2017.
\bibitem{HopcroftUllman} J.E. Hopcroft, J.D. Ullman,
{\em Introduction to automata theory, languages and computation},
Addison-Wesley, 1979.
\bibitem{Howlett} R.B. Howlett, Miscellaneous facts about Coxeter groups, Lectures given at the ANU Group Actions Worshop, October 1993, Research Report 93-38. 
\bibitem{HruskaKleiner} G.C. Hruska, B. Kleiner, Hadamard spaces with isolated 
flats, Geom. Top. 9 (2005) 1501--1538.
\bibitem{HuangOsajda20} J. Huang, D. Osajda, Large-type Artin groups are systolic. Proc. Lond. Math. Soc. (3) 120 (2020), no. 1, 95–123.
\bibitem{HuangOsajda21} J. Huang, D. Osajda, Helly meets Garside and Artin, {\tt arXiv:1904.09060}
\bibitem{Humphreys} J.E. Humphreys, Reflection groups and Coxeter groups, Cambridge Studies in Advanced Mathematics 29, CUP, Cambridge, 1990.
\bibitem{JS06} T. Januszkiewicz and J. \'{S}wi\c{a}tkowski, Simplicial nonpositive
curvature, Publ. Math. Inst. Hautes Etudes Sci. No. 104 (2006), 1--85.
\bibitem{Johnson} D.L. Johnson, {\em Presentations of Groups}, LMS Student Texts 15, CUP 1990, Cambridge. 
\bibitem{kbmag} D.F. Holt,
{\it {\sf KBMAG}---Knuth--Bendix in Monoids and Automatic Groups}, software
package (1995), available from
\verb+(http://homepages.warwick.ac.uk/~mareg/download/kbmag2/)+
\bibitem{Lang} U. Lang, Quasigeodesics outside horoballs. Geom. Dedicata 63 (1996), no. 2, 205–215.
\bibitem{LearyMinasyan} I. Leary, A. Minasyan, Commensurating HNNextensions:
nonpositive curvature and biautomaticity, {\tt arXiv:1907.03515} 
\bibitem{LyndonSchupp} R.C. Lyndon, P.E. Schupp, Combinatorial group theory. Reprint of the 1977 edition. Classics in Mathematics. Springer-Verlag, Berlin, 2001. 
\bibitem{Marshall} J. Marshall, Computational problems in hyperbolic groups,
Internat. J. Algebra Comput. 15 (2005), no. 1, 1--13.
\bibitem{magma} Magma Computational Algebra System version 2.26-8, available at http://magma.maths.usyd.edu.au/magma/ 
\bibitem{Mosher95} L. Mosher, Mapping class groups are automatic, Ann. of Math., 142 (1995) no. 2, 303--384. 
\bibitem{Mosher97} L. Mosher, Central quotients of biautomatic groups, Comment. Math. Helv., 72, (1997) no. 1, 16--29.
\bibitem{Moussong} G. Moussong, Hyperbolic Coxeter groups, PhD thesis, Ohio State University 1988.
\bibitem{MunroEtAl} Z. Munro, D. Osajda, P. Przytycki, 2-dimensional Coxeter groups are biautomatic, {\tt arXiv:2006.07947}
\bibitem{NibloReeves98}  G.A. Niblo, L.D. Reeves, The geometry of cube
complexes and the complexity of their fundamental groups, Topology 37 (1998),
621--633.
\bibitem{NibloReeves03}
G.A. Niblo, L.D. Reeves, Coxeter groups act on CAT(0) cube complexes. J. Group Theory 6 (2003), no. 3, 399--413. 
\bibitem{Osin} D. Osin, Relatively hyperbolic groups:
intrinsic geometry, algebraic properties, and algorithmic problems,
Mem. Amer. Math. Soc. 179 (2006), no. 843.
\bibitem{Papasoglu}  P. Papasoglu, Strongly geodesically automatic groups are 
hyperbolic. Invent. Math. 121 (1995), no. 2, 323--334.
\bibitem{Peifer} D. Peifer, Artin groups of extra-large type are biautomatic. J. Pure Appl. Algebra 110 (1996), no. 1, 15–56.
\bibitem{Romankov} V. Romankov, Polycyclic, metabelian or soluble of type $(FP)_\infty$ groups with Boolean algebra of rational sets and biautomatic soluble groups are virtually abelian,
Glasg. Math. J. 60 (2018), no. 1, 209–218. 
\bibitem{Sela} Z. Sela, The isomorphism problem for hyperbolic groups. I,
Ann. of Math. (2) 141 (1995), no. 2, 217--283. 
\bibitem{Short90} H.B. Short, Groups and combings,\\
	{\tt  https://www.i2m.univ-amu.fr/~short/Papers/bicomball.pdf} 
\bibitem{Thurston82} W.P. Thurston, Three-dimensional manifolds, Kleinian groups and hyperbolic geometry, Bull. Amer. Math. Soc. (NS)  6(3) (1982) 357--381.
\bibitem{Thurston3DGT} W.P. Thurston, {\em Three-dimensional geometry and topology}, Princeton University Press, Princeton 1997.
\bibitem{VanWyk} L.A. VanWyk, Graph groups are biautomatic, J. Pure Appl. Algebra 94 (1994), no. 3, 341–352.
\bibitem{Wakefield} P. Wakefield, Procedures for automatic groups, PhD thesis,
	University of Newcastle 1998. 
\bibitem{Wil99} B.T. Williams, Two topics in geometric group theory, PhD thesis,
	University of Southampton, 1999.
\bibitem{Young} R. Young, The Dehn function of $\SL(n;\Z)$,
Ann. of Math. (2) 177 (2013), no. 3, 969--1027.   
\end{thebibliography}
\end{document}